\setlist[enumerate]{leftmargin=.5in}
\setlist[itemize]{leftmargin=.5in}
\newtheorem{theorem}{Theorem}
\newtheorem{proposition}[theorem]{Proposition}
\newtheorem{remark}[theorem]{Remark}
\newtheorem{definition}[theorem]{Definition}
\newtheorem{lemma}[theorem]{Lemma}
\newcommand*\dx{\mathop{}\!\mathrm{d}}
\title{Exact Parameter Identification in PET Pharmacokinetic Modeling: Extension to the Reversible Two Tissue Compartment Model
}
\author{Martin Holler \thanks{IDea\_Lab - The Interdisciplinary Digital Lab at the University of Graz, Austria. MH further is a member of NAWI Graz (\href{https://www.nawigraz.at}{www.nawigraz.at}) and of BioTechMed Graz (\href{https://biotechmedgraz.at}{biotechmedgraz.at}) (\href{mailto:martin.holler@uni-graz.at}{martin.holler@uni-graz.at})} \and Erion Morina \thanks{Corresponding author. IDea\_Lab - The Interdisciplinary Digital Lab at the University of Graz, Austria.
		(\href{mailto:erion.morina@uni-graz.at}{erion.morina@uni-graz.at}).} \and Georg Schramm \thanks{Department of Imaging and Pathology, KU Leuven, Belgium. (\href{mailto:georg.schramm@kuleuven.be}{georg.schramm@kuleuven.be}).}}
\newcommand{\N}{\mathbb{N}}
\newcommand{\R}{\mathbb{R}}
\newcommand{\tis}{{\text{T}}}
\newcommand{\tot}{{\text{WB}}}
\newcommand{\art}{{\text{P}}}
\newcommand{\pet}{{\text{PET}}}
\newcommand{\free}{{\text{F}}}
\newcommand{\bd}{{\text{B}}}
\newcommand{\fbv}{{V_\bd}}
\newcommand{\contentskip}[1]{}
\renewcommand{\contentskip}[1]{#1}
\begin{document}
	\maketitle
	\begin{abstract}
		This paper addresses the problem of recovering tracer kinetic parameters from multi-region measurement data in quantitative PET imaging using the \emph{reversible} two tissue compartment model. Its main result is an extension of our previous work on the \emph{irreversible} two tissue compartment model. In analogy to our previous work, we show that also in the (practically highly relevant) reversible case, most tracer kinetic parameters can be uniquely identified from standard PET measurements (without additional full blood sample analysis that is usually performed in practice) and under reasonable assumptions. In addition, unique identifiability of \emph{all} parameters is shown provided that additional measurements from the (uncorrected) total arterial blood tracer concentration (which can be obtained from standard PET measurements or from a simple blood sample analysis) are available.
	\end{abstract}
	\begin{keywords}
		Quantitative PET imaging, exact reconstruction, two tissue compartment model
	\end{keywords}
	\begin{MSCcodes}
		65L09, 94A12, 92C55, 34C60
	\end{MSCcodes}
	\section{Introduction}
	
	Quantitative dynamic positron emission tomography (PET) images the space-time distribution of a radiotracer in tissue after injection. Depending on the choice of tracer, the reconstructed PET images allow conclusions about physiological parameters such as glucose metabolism, neuro receptor dynamics, blood flow, etc. The underlying pharmacokinetics are commonly modeled via compartment models which allow to model the dynamics between blood and tissue compartments using ordinary-differential-equations (ODEs). Exchange rates between compartments are defined via tracer kinetic parameters that define the response to the arterial tracer supply in a given tissue region. Their identification is usually based on measurements of the tracer concentration in tissue $C_\tis(t)$ and of the concentration of the original non-metabolized free tracer in the arterial blood plasma $C_\art(t)$ that is supplied to tissue. The former can be obtained from image-based measurements at voxel- or region-of-interest level. Estimating $C_\art(t)$, however, requires complicated and expensive blood sample analysis: While image-based measurements of the total arterial blood tracer concentration $C_\tot(t)$ are possible using image-derived input functions techniques, $C_\art(t)$ can be obtained from $C_\tot(t)$ only via an (unknown) attenuation factor $f(t)$ as $C_\art(t) = f(t)C_\tot(t)$, where the attenuation factor $f(t)$ accounts for (unknown) activity from radioactive molecules that are not available for exchange with tissue due to different metabolic mechanisms.

The goal of avoiding additional, expensive blood sample analysis for obtaining $C_\art(t)$ motivates identifying tissue kinetic parameters directly from the reconstructed PET images. For computational modeling approaches in quantitative PET see for instance \cite{dimit_2021, Ton15, van_der_weijden_2023, Ver13}. Other modeling approaches that additionally aim to simultaneously fit data from multiple anatomical regions coupled by common parameters include \cite{chen_2019,huesman_1997, matheson_2022, matheson_2023,raylman_1994,ogden_2015}. Specifically the extraction of the input function for dynamic  [\textsuperscript{18}F]-FDG data using the irreversible two tissue compartment model under joint analysis of multiple regions is studied in \cite{wong_2001} (SIME) and a corresponding empirical study in \cite{Ogden2010}. A recent work for the simultaneous estimation of kinetic parameters and input function for irreversible FDG modeling is \cite{Narciso_2024}. Yet another work in that direction, using the kernel method and taking into account small reversible effects, is \cite{Zhu_2024}. Finally, also noteworthy, is the work \cite{Sari2018} which considers SIME under image-based $C_\tot(t)$ and a specific model for $f(t)$.

From the mathematical perspective, obtaining kinetic tissue parameters from PET measurements is a highly non-linear inverse problem, specifically an ODE-based parameter identification problem, where the parameters need to be reconstructed from a low number of time-point measurements of a linear transformation of the state. While existing works such as the ones mentioned above focus on computational techniques for solving this problem, even in the idealized case of having noise-free measurements and measurements of the concentration of non-metabolized free tracer in the arterial blood plasma $C_\art(t)$, it is actually not even clear i) if the tissue parameters can be uniquely identified from the available measurements and ii) how many time-point measurements are required for such a unique identification.

To the best of our knowledge, our previous work \cite{Holler_2024} was the first one to address this uniqueness issue in quantitative PET analytically. Considering the \emph{reversible} two tissue compartment model, \cite{Holler_2024} shows that most of the kinetic parameters can in fact be recovered uniquely from image-based measurements of the tracer concentration in tissue $C_\tis(t)$ only, and provides also an explicit formula on the number of time-point-measurements that are sufficient for unique identifiability. Further, \cite{Holler_2024} proves that \emph{all} kinetic parameters can be recovered uniquely provided that additional measurements from the (uncorrected) total arterial blood tracer concentration are available, which can be obtained from standard PET measurements or from a simple blood sample analysis. In addition to these analytic results, \cite{Holler_2024} provides numerical experiments using a realistic simulation of the PET measurement process and noise that confirm the practical relevance and applicability of such uniqueness results.

A limitation of \cite{Holler_2024} is that it only considers the \emph{irreversible} two tissue compartment model, a model consisting of two tissue compartments (per voxel or region of interest) where, due to irreversibility, the concentration in one compartment can directly and explicitly obtained from the concentration in the other compartment. The uniqueness analysis of \cite{Holler_2024} heavily relies on this.

	Here, we consider a generalization of the analytic unique parameter identification results presented in \cite{Holler_2024} to the practically highly relevant case of the \emph{reversible} two tissue compartment model (see Figure \ref{fig:comparment_model_scheme} for a scheme) that is used to model the kinetics of most radiotracers,
	especially in neurological PET applications. While the underlying techniques of our proofs are similar to the ones of \cite{Holler_2024}, the extension to reversible compartments significantly complicates the proof and requires quite an extensive analysis.

An informal version of our main result is as follows:
	\begin{theorem}[Main result - informal version]
		Let $(K_1^i,k_2^i,k_3^i,k_4^i)$ be the kinetic parameters of different tissues $i=1,\ldots,n$ of the reversible two tissue compartment model, let $T$ be the number of time-points where PET measurements of $C_\tis^i(t)$ that are available, and let $p$ be the degree of a polyexponential parametrization used to approximate the unknown $C_\art(t)$.
		\begin{itemize}[leftmargin=0.7cm]
			\item If $T\geq2(p+4)$, and under some mild conditions as stated in Theorem \ref{thm:main_uniqueness}, the parameters $k_2^i,k_3^i, k_4^i$ for $i=1,\ldots,n$ can be identified uniquely already from the available image-based measurements of $C_\tis^i(t)$ in the different tissues $i$ without the need of $C_\tot(t)$ and $f(t)$.
			\item Further, the $K_1^i$ can also be identified already from these measurements up to a constant that is the same for all regions $i$.
			\item Moreover, the parameters $K_1^i$ can be identified exactly if a sufficient number of measurements of $C_\tot$ is available, without the need of $f(t)$.
		\end{itemize}
	\end{theorem}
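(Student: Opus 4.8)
The plan is to reduce the problem to the classical question of recovering a real exponential sum from point samples and then to invert the (strongly coupled) map from the resulting macro-parameters to the kinetic micro-parameters, crucially exploiting that the plasma input is shared across all regions $i$. First I would write the reversible two-tissue system in state-space form and compute its transfer function from $C_\art$ to $C_\tis^i$,
\[
\hat h^i(s)=\frac{K_1^i\,(s+k_3^i+k_4^i)}{s^2+(k_2^i+k_3^i+k_4^i)\,s+k_2^ik_4^i}=\frac{K_1^i\,(s+k_3^i+k_4^i)}{(s+\alpha_1^i)(s+\alpha_2^i)},
\]
so that $\alpha_1^i+\alpha_2^i=k_2^i+k_3^i+k_4^i$ and $\alpha_1^i\alpha_2^i=k_2^ik_4^i$. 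Inserting the polyexponential ansatz $C_\art(t)=\sum_{j=1}^p a_j e^{-b_j t}$ and computing the convolution $h^i*C_\art$ by partial fractions, $C_\tis^i$ becomes a real exponential sum whose frequencies are the union of the shared plasma rates $\{b_j\}$ and the region-specific eigen-rates $\{\alpha_1^i,\alpha_2^i\}$ (together with the modes entering through the $V_\bd^i$ total-blood term in the full signal), and whose coefficient of $e^{-b_jt}$ is $d_j^i:=a_j\,\hat h^i(-b_j)$.

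Since such a curve carries at most $p+4$ distinct modes, and a nonzero real exponential sum with $N$ terms has fewer than $N$ real zeros, any two admissible parameter sets agreeing at $T\ge 2(p+4)$ distinct time points must yield identical curves; combined with the uniqueness of the representation of a function as an exponential sum (distinct rates, nonzero coefficients), this recovers, for each region, the rates $\alpha_1^i,\alpha_2^i$ and all coefficients $d_j^i$ exactly. The ``mild conditions'' enter here as genericity assumptions — the $b_j$ distinct, $\alpha_1^i\neq\alpha_2^i$, and $\{\alpha_1^i,\alpha_2^i\}\cap\{b_j\}=\emptyset$ — which guarantee the modes are separated and the coefficients nondegenerate.

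For the macro-to-micro inversion, $\alpha_1^i+\alpha_2^i$ and $\alpha_1^i\alpha_2^i$ give two of the three relations for $(k_2^i,k_3^i,k_4^i)$; the missing quantity is $s_2^i:=k_3^i+k_4^i$, which I extract from the amplitudes via $e_j^i:=d_j^i(\alpha_1^i-b_j)(\alpha_2^i-b_j)=a_jK_1^i\,(s_2^i-b_j)$. Eliminating the unknown shared $a_j$ by forming ratios against a reference region yields, for each $i$, a Möbius relation $e_j^i/e_j^1=(K_1^i/K_1^1)(s_2^i-b_j)/(s_2^1-b_j)$ in the known variable $b_j$; with $p\ge 3$ distinct plasma rates (or enough regions) this fixes $s_2^i$, hence $k_2^i=\alpha_1^i+\alpha_2^i-s_2^i$, $k_4^i=\alpha_1^i\alpha_2^i/k_2^i$, $k_3^i=s_2^i-k_4^i$, together with the ratios $K_1^i/K_1^1$, i.e. the $K_1^i$ up to one global constant — proving the first two bullets. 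The residual ambiguity is exactly the coupling $K_1^i a_j$ versus $(K_1^i/\lambda)(\lambda a_j)$, the unknown absolute scale of $C_\art$; sampling $C_\tot$ at enough points recovers the true magnitude of the input (through $C_\art=fC_\tot$ at a reference normalization, which uses the size of $C_\tot$ but not the full attenuation $f$) and thereby pins down $\lambda$ and each $K_1^i$ exactly.

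The hard part will be the amplitude step of the inversion: proving that $s_2^i=k_3^i+k_4^i$ is recoverable and that the coupled rational system across regions has a \emph{unique} solution under the stated genericity conditions. This is precisely where the reversible model is more delicate than the irreversible one of \cite{Holler_2024}: both tissue eigenvalues are now nontrivial and the amplitude and rate parameters are entangled inside $\hat h^i$, so one cannot read off one compartment explicitly from the other and must instead leverage the shared input function across several regions to close the system. I expect the bulk of the technical work, and the exact statement of the ``mild conditions'', to reside in establishing this uniqueness.
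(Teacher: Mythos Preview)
Your plan is the paper's plan: write $C_\tis^i$ via the transfer function as a real exponential sum in the shared plasma rates $\{b_j\}$ and the region-specific eigen-rates $\{\alpha_1^i,\alpha_2^i\}$, use a zero-counting interpolation lemma to conclude that two admissible configurations agreeing at $T\ge 2(p+4)$ points give identical functions, and then compare coefficients across regions (exploiting the shared input) to recover $k_3^i+k_4^i$, hence $k_2^i,k_3^i,k_4^i$, and the $K_1^i$ up to a common scalar. You also correctly locate the hard part in the cross-region amplitude inversion.

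Two substantive gaps remain. First, your genericity hypothesis $\{\alpha_1^i,\alpha_2^i\}\cap\{b_j\}=\emptyset$ is \emph{not} among the paper's mild conditions, and in a uniqueness proof you cannot impose it on the competing configuration: even if the ground truth satisfies it, you must rule out alternatives whose $\tilde\alpha$'s collide with some $b_j$ or whose $\tilde b_j$ collide with some $\alpha^i$. This is exactly why the paper's interpolation lemma allows polynomial prefactors and why the bound is $2(p+4)$ rather than $2(p+2)$: the extra $+4$ counts each tissue eigenvalue with multiplicity two to absorb possible $te^{\alpha t}$ terms from such collisions --- it has nothing to do with a blood-volume contribution. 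Most of the paper's Proposition~\ref{prop:main_uniqueness_result} is a case analysis of precisely these overlaps; under your extra assumption that analysis collapses, but you are then proving a strictly weaker theorem. Relatedly, from a single region you only recover an unlabelled set of $p+2$ rates; separating the two region-specific $\alpha$'s from the $p$ shared $b$'s already needs the cross-region comparison (several regions with pairwise distinct $\alpha$'s), which is the content of the paper's Assumption~\eqref{ass:main_technical_ass}. Second, your treatment of the third bullet is not right: knowing $C_\tot$ alone cannot fix the scale of $C_\art=fC_\tot$ since $f$ is unknown. The paper's argument assumes $f$ lies in a fixed degree-$q$ parametrized class (e.g.\ polyexponentials with $f(0)=1$); then $\tilde C_\art=\zeta C_\art$ together with $q$ samples of $C_\tot$ forces $\zeta f-\tilde f$ to vanish at $q$ points, which in that class implies $\zeta=1$ and $f=\tilde f$.
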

	The precise result can be found in Theorem \ref{thm:main_uniqueness} below. The practical relevance of this result is that the kinetic parameters $(K_1^i,k_2^i,k_3^i,k_4^i)$, $i=1,\ldots,n$, can, in principle, be uniquely recovered (up to a global constant for the $K^i_1$ parameters) from image-based measurements of the tissue concentration in the different tissue types. For that, one requires sufficient quality of the reconstructed images at sufficiently many time-points (e.g. $T\geq 16$ if $p=4$). Ambiguity in $K_1^i$, $i=1,\ldots,n$, can be resolved provided enough high quality image-based measurements of the total arterial tracer concentration are available. These results can be generalized to the setup where PET image measurements consist of mixtures of blood tracer and tissue concentration rather than solely the latter (see Remark \ref{rem:cpet} below).
	
	\noindent\textbf{Scope of the paper.} In Section \ref{sec:compartment_model}, we introduce the reversible two tissue compartment model and discuss explicit solutions in both the general case and when the arterial concentration is parameterized by polyexponential functions. Section \ref{sec:unique_identifiability} is dedicated to our main results on the unique identifiability of parameters.

	\section{Model} \label{sec:compartment_model}
	The radiotracer is first administered to the blood system where it is available in non-metabolized, free form and determines the arterial plasma concentration $C_\art:[0,\infty) \rightarrow [0,\infty)$. Gradually the tracer is exchanged with tissue, resulting in a concentration in tissue $C_\tis:[0,\infty) \rightarrow [0,\infty)$. A common choice in practice to characterize the relationship between the concentration of the tracer in the blood (arterial blood plasma) and tissue (extra-vascular) compartment are two tissue compartment models. The tissue compartment is further subdivided into a \emph{free} and a \emph{bound} compartment, additionally depending on tissue $i=1,\dots, n$. The corresponding concentrations of the tracer in region $i$ are denoted by $C_\tis^i, C_\free^i, C_\bd^i:[0,\infty) \rightarrow [0,\infty)$ where $C_\tis^i=C_\free^i+C_\bd^i$. Here, as opposed to \cite{Holler_2024}, considering the irreversible two tissue compartment model, the radiotracer reaching the bound compartment can return to the free compartment. This extension is known as the \emph{reversible two tissue compartment model}. The interdependence of
	the different concentrations is described by the system of ordinary differential
	equations (ODEs):
	\begin{align*}
		\label{eq:odesystem}
		\begin{cases}
			\frac{\dx}{\dx t}C^i_\free= K^i_1 C_\art - \left(k^i_2+k^i_3\right) C^i_\free+k_4^i C_\bd^i, & t>0 \\
			\frac{\dx}{\dx t}C^i_\bd = k^i_3 C^i_\free-k_4^i C_\bd^i,                                    & t>0 \\
			C^i_\free\left(0\right)=0, ~ C^i_\bd\left(0\right)=0. \tag{S}
		\end{cases}
	\end{align*}
	The interactions between the (sub-)compartments for different anatomical regions $i$ are prescribed by the tracer kinetic parameters $K_1^i$, $k_2^i$, $k_3^i$ and $k_4^i$. See Figure \ref{fig:comparment_model_scheme} for an illustration of the model.
	
	\begin{figure}
		\begin{center}
			\begin{tikzpicture}[%
				>=stealth,
				node distance=3cm,
				on grid,
				auto
				]
				\draw[very thick] (0,0.5) rectangle (2,3.5);
				\draw[very thick] (3,0) rectangle (10.5,4);
				\draw[very thick] (3.5,0.5) rectangle (5.5,3.5);
				\draw[very thick] (7,0.5) rectangle (9,3.5);
				
				\path[->,very thick] (2,2) edge (3.5,2);
				\path[->,very thick] (5.5,2.25) edge (7,2.25);
				\path[->,very thick] (5.5,1.5) edge (7,-0.5);
				\path[->,very thick] (7,1.75) edge (5.5,1.75);
				
				\node (T) at (1,2)[align=center]{\(C_\art\)};;
				\node (T) at (4.5,2)[align=center]{\(C_\free\)};;
				\node (T) at (8,2)[align=center]{\(C_\bd\)};;
				\node (T) at (9.75,2)[align=center]{\(C_\tis\)};;
				
				\node (T) at (2.5,2.25)[align=center]{\(K_1\)};;
				\node (T) at (6.25,2.5)[align=center]{\(k_3\)};;
				\node (T) at (6.25,1.5)[align=center]{\(k_4\)};;
				\node (T) at (6.4,-0.5)[align=center]{\(k_2\)};;
				
			\end{tikzpicture}
		\end{center}
		\caption{\label{fig:comparment_model_scheme} Reversible two tissue compartment model. The (sub-)compartments are illustrated by boxes around the concentrations \(C_\art, C_\bd, C_\free\) and \(C_\tis\). The (directional) exchange rates \(K_1, k_2, k_3, k_4\) between the (sub-)compartments are represented by arrows.}
	\end{figure}
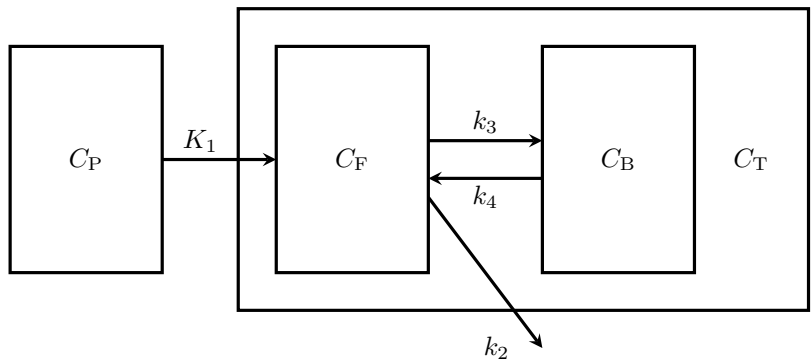

	Analogous to \cite{Holler_2024} we aim at studying identifiability of the parameters $K_1^i$, $k_2^i$, $k_3^i$ and $k_4^i$ for
	$i=1,\ldots,n$ based on measurements $C_\tis^i(t_l)$ at
	different time-points $t_1,\ldots,t_T$ and potentially on additional measurements related to $C_\art$.
	The concentration $C_\art$ is modeled as $C_\art(t) = f(t)C_\tot(t)$, where $C_\tot:[0,\infty) \rightarrow [0,\infty)$ is the total arterial tracer concentration and $f:[0,\infty) \rightarrow [0,1]$ with $f(0)=1$ is an attenuation term. While $C_\tot$ can be obtained from image-based measurements of arterial compartments or from a simple blood sample analysis, obtaining $f$ (and hence $C_\art$) would require a costly and time-consuming process of plasma separation and metabolite analysis in blood samples.

	In view of the parameter identification for ODE model
	\eqref{eq:odesystem} we parametrize the unknown arterial
	concentration $C_\art$ via
	polyexponential functions, i.e., for $p\in\mathbb{N}$ (degree), nonzero \((\lambda_i)_{i=1}^p\subset\mathbb{R}\) and pairwise distinct \((\mu_i)_{i=1}^p\subset \mathbb{R}\) (in practice negative as $C_\art$ decays to zero), $C_\art$ is given by
	\[
		C_\art\left(t\right) = \sum_{i=1}^p\lambda_i e^{\mu_i t}.
	\]
	Despite being standard in practice, this parametrization is also beneficial from a mathematical point of view since polyexponential functions can approximate continuous functions arbitrarily well on compacta (see \cite[Remark 3]{Holler_2024}). Furthermore, a parametrization of the arterial concentration already determines a parametrization of the resulting concentrations $C_\tis^i, C_\free^i, C_\bd^i$ of the ODE system \eqref{eq:odesystem} as we will see in Lemma \ref{lem:ode_solution_general} below. First we consider an auxiliary technical result, that will prove useful later.
	
	\begin{lemma}
		\label{cor:alphas}
		Let $k_2,k_3,k_4\geq 0$ and $k:=(k_2+k_3+k_4)/2$. Define further $$\alpha_1:=-k+\sqrt{k^2-k_2k_4}\quad \text{and} \quad \alpha_2:=-k-\sqrt{k^2-k_2k_4}.$$
		Then $\alpha_1,\alpha_2\in \mathbb{R}$ and the following identities hold true
		\begin{align}
			\label{alpha_identities}
		\alpha_1+\alpha_2+k_2+k_3+k_4=0 ~ ~ ~ \text{and} ~ ~ ~ \alpha_1\alpha_2 = k_2k_4.
		\end{align}
		If $(k_2+k_3+k_4)^2>4k_2k_4$ (which holds for $k_3>0$ or in case of $k_3=0$ for $k_2\neq k_4$) then $\alpha_2<\alpha_1$. Furthermore, if in addition to $k_3>0$ also $k_2>0$ and $k_4>0$ hold, then $$\alpha_2<-k_2<\alpha_1<0.$$
	\end{lemma}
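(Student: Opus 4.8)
The plan is to recognize $\alpha_1$ and $\alpha_2$ as the two roots of the monic quadratic $P(x) = x^2 + (k_2+k_3+k_4)x + k_2k_4$, since completing the square gives $P(x) = (x+k)^2 - (k^2 - k_2k_4)$, whose roots are exactly $-k \pm \sqrt{k^2 - k_2k_4}$. This reframing makes the two Vieta identities in \eqref{alpha_identities} immediate and reduces both sign statements to evaluating $P$ at well-chosen points.

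First I would establish reality. The square root is real precisely when $k^2 \geq k_2k_4$, i.e. when $(k_2+k_3+k_4)^2 \geq 4k_2k_4$. Expanding and regrouping yields the key algebraic identity
$$(k_2+k_3+k_4)^2 - 4k_2k_4 = (k_2 - k_4)^2 + k_3\left(k_3 + 2k_2 + 2k_4\right),$$
whose right-hand side is a sum of two nonnegative terms under $k_2,k_3,k_4 \geq 0$. This single identity does double duty: it gives $\alpha_1,\alpha_2 \in \mathbb{R}$, and it also drives the strict-separation claim, since its right-hand side is strictly positive exactly when $k_3 > 0$ (the second summand is then positive) or when $k_3 = 0$ with $k_2 \neq k_4$ (the first summand is then positive). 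In either case $k^2 - k_2k_4 > 0$, so that $\alpha_1 - \alpha_2 = 2\sqrt{k^2 - k_2k_4} > 0$, giving $\alpha_2 < \alpha_1$. The Vieta identities then follow directly from the root structure: $\alpha_1 + \alpha_2 = -2k = -(k_2+k_3+k_4)$ yields the first, and the difference-of-squares computation $\alpha_1\alpha_2 = k^2 - (k^2 - k_2k_4) = k_2k_4$ yields the second.

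For the refined ordering $\alpha_2 < -k_2 < \alpha_1 < 0$ under $k_2,k_3,k_4 > 0$, I would argue via the sign of the upward parabola $P$. Since $k > 0$ and $k_2k_4 > 0$ force $\sqrt{k^2 - k_2k_4} < k$, we obtain $\alpha_1 = -k + \sqrt{k^2 - k_2k_4} < 0$. For the placement of $-k_2$ strictly between the roots, I evaluate
$$P(-k_2) = k_2^2 - k_2(k_2+k_3+k_4) + k_2k_4 = -k_2k_3 < 0;$$
because $P$ is an upward parabola with roots $\alpha_2 < \alpha_1$, a strictly negative value at $-k_2$ forces $\alpha_2 < -k_2 < \alpha_1$, which combined with $\alpha_1 < 0$ gives the full chain.

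I expect no serious obstacle here once the quadratic viewpoint is adopted: everything reduces to elementary algebra. The only points requiring genuine care are the bookkeeping in the regrouping identity used for both reality and strict separation, and ensuring that the case split ($k_3 > 0$ versus $k_3 = 0$ with $k_2 \neq k_4$) is cleanly dispatched by the two summands separately.
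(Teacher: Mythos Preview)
Your proof is correct and complete. The paper's own proof consists of the single sentence ``The assertions of the lemma follow by immediate calculations,'' so your argument is a fully worked-out version of what the paper leaves implicit; the quadratic viewpoint $P(x)=x^2+(k_2+k_3+k_4)x+k_2k_4$ with Vieta and the sign evaluation $P(-k_2)=-k_2k_3$ is exactly the natural way to carry out those calculations.
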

	\begin{proof}
		The assertions of the lemma follow by immediate calculations.
	\end{proof}
	The next two results follow from standard ODE theory and provide explicitly the solutions $(C_\free^i,C_\bd^i)$ of the ODE system \eqref{eq:odesystem}. We start with the general case. 
	\begin{lemma} \label{lem:ode_solution_general}
		Let $C_\art:[0,\infty) \rightarrow [0,\infty)$ be continuous, and let the parameters $K_1^i$, $k_2^i, k_3^i, k_4^i\geq 0$ be fixed for $i=1,\ldots,n$ such that $(k_2^i+k_3^i+k_4^i)^2>4k_2^ik_4^i$. Then, for each $i=1,\ldots,n$, the ODE system \eqref{eq:odesystem} admits a unique solution $(C_\free^i,C_\bd^i)$ that is defined on all of $[0,\infty)$, and such that $C_\tis^i = C_\free^i + C_\bd^i$ is given by
		\begin{equation}
			\label{eq:ct_representation_general}
			C_\tis^i(t) = \frac{K_1^i}{\alpha_2^i-\alpha_1^i}\left[(\alpha_2^i+k_2^i)\int_0^t e^{-\alpha_1^i (s-t)}C_\art(s)\dx s-(\alpha_1^i+k_2^i)\int_0^t e^{-\alpha_2^i (s-t)}C_\art(s)\dx s\right]
		\end{equation}
		where $\alpha_1^i = -k^i+\sqrt{(k^i)^2-k_2^ik_4^i}$ and $\alpha_2^i = -k^i-\sqrt{(k^i)^2-k_2^ik_4^i}$ with $k^i=(k_2^i+k_3^i+k_4^i)/2$.
	\end{lemma}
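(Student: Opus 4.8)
The plan is to solve the linear ODE system \eqref{eq:odesystem} explicitly by diagonalizing its coefficient matrix and then recognizing the resulting solution as a convolution with the forcing term $C_\art$. First I would rewrite the system in vector form as $\frac{\dx}{\dx t}\mathbf{C}^i = A^i \mathbf{C}^i + \mathbf{b}^i(t)$, where $\mathbf{C}^i=(C_\free^i,C_\bd^i)^\top$, the inhomogeneity is $\mathbf{b}^i(t)=(K_1^i C_\art(t),0)^\top$, and the matrix is
\[
A^i = \begin{pmatrix} -(k_2^i+k_3^i) & k_4^i \\ k_3^i & -k_4^i \end{pmatrix}.
\]
A direct computation shows the characteristic polynomial of $A^i$ is $\alpha^2 + (k_2^i+k_3^i+k_4^i)\alpha + k_2^i k_4^i$, whose roots are exactly $\alpha_1^i,\alpha_2^i$ from Lemma~\ref{cor:alphas} (using the Vieta identities \eqref{alpha_identities}). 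The hypothesis $(k_2^i+k_3^i+k_4^i)^2>4k_2^ik_4^i$ guarantees a strictly positive discriminant, hence two distinct real eigenvalues, so $A^i$ is diagonalizable; this is what makes the closed form clean and is precisely why that condition is imposed.

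Next I would invoke the variation-of-constants (Duhamel) formula: since $\mathbf{C}^i(0)=0$, the unique solution on $[0,\infty)$ is
\[
\mathbf{C}^i(t) = \int_0^t e^{A^i(t-s)} \mathbf{b}^i(s)\dx s,
\]
and existence, uniqueness, and global definition follow from standard linear ODE theory because $A^i$ is constant and $C_\art$ is continuous (so $\mathbf{b}^i$ is continuous). The matrix exponential $e^{A^i(t-s)}$ can be written via the eigendecomposition as a combination of $e^{\alpha_1^i(t-s)}$ and $e^{\alpha_2^i(t-s)}$ with coefficients built from the eigenvectors of $A^i$. To extract $C_\tis^i = C_\free^i + C_\bd^i$, I only need the sum of the two entries of $e^{A^i(t-s)}\mathbf{b}^i(s)$, i.e.\ the inner product of $(1,1)$ with the relevant column, which collapses the bookkeeping considerably.

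The main technical step is therefore to compute this particular linear combination of matrix-exponential entries and verify it equals the bracketed expression in \eqref{eq:ct_representation_general}, with the factor $\tfrac{K_1^i}{\alpha_2^i-\alpha_1^i}$ and the coefficients $(\alpha_2^i+k_2^i)$ and $-(\alpha_1^i+k_2^i)$ attached to $e^{-\alpha_1^i(s-t)}$ and $e^{-\alpha_2^i(s-t)}$ respectively. Concretely, one writes $e^{A^i\tau} = \tfrac{1}{\alpha_1^i-\alpha_2^i}\big[(A^i-\alpha_2^i I)e^{\alpha_1^i\tau}-(A^i-\alpha_1^i I)e^{\alpha_2^i\tau}\big]$, applies it to $\mathbf{b}^i$, and sums the components; the entries of $A^i-\alpha_j^i I$ acting on $(K_1^i C_\art,0)^\top$ produce exactly the factors $\alpha_1^i+k_2^i$ and $\alpha_2^i+k_2^i$ after using that the row sums of $A^i$ relevant here reduce via the identity $\alpha_1^i+\alpha_2^i = -(k_2^i+k_3^i+k_4^i)$ from \eqref{alpha_identities}. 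I expect the main obstacle to be purely the algebraic simplification of matching these coefficients, where repeated use of the Vieta relations \eqref{alpha_identities} is essential to collapse terms into the stated form; there is no conceptual difficulty beyond careful accounting, which is presumably why the authors delegate it to standard ODE theory rather than spelling it out.
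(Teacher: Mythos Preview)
Your approach is correct and follows the same overall strategy as the paper (variation of constants applied to the linear system, eigenvalue decomposition of the coefficient matrix), but your execution differs in a useful way. The paper explicitly writes out an eigenvector matrix $V$ with $A=VDV^{-1}$, computes $C_\free^i$ and $C_\bd^i$ separately, and then sums; because the chosen $V$ carries a prefactor $1/(k_4^i(\alpha_2^i-\alpha_1^i))$, the case $k_4^i=0$ must be treated separately by solving the decoupled equations directly. Your use of the Sylvester/Lagrange formula $e^{A\tau}=\tfrac{1}{\alpha_1-\alpha_2}\big[(A-\alpha_2 I)e^{\alpha_1\tau}-(A-\alpha_1 I)e^{\alpha_2\tau}\big]$ requires only $\alpha_1\neq\alpha_2$, which is exactly the stated hypothesis, so no case split is needed; moreover, taking the $(1,1)$-inner product to extract $C_\tis^i$ directly is cleaner than computing both components and adding. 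One small remark: the simplification is even easier than you anticipate, since $(1,1)\cdot(A^i-\alpha_j^i I)(1,0)^\top = -(k_2^i+k_3^i)-\alpha_j^i+k_3^i = -(\alpha_j^i+k_2^i)$ immediately, without invoking the Vieta identity \eqref{alpha_identities}.
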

	\begin{proof}
		Fix $i \in \{1,\ldots,n\}$. For $C = \begin{pmatrix}
			C_\free^i\\ C_\bd^i
		\end{pmatrix}$ it follows immediately from the equations in \eqref{eq:odesystem} that
		\begin{align}
			\label{C_integral}
			C(t) = \exp(At)\int_0^t \exp(-A s)\begin{pmatrix}
				K_1^i C_\art(s)\\ 0
			\end{pmatrix}\dx s\quad \text{for} ~ t\geq0 \quad \text{with} ~ A=\begin{pmatrix}
			-(k_2^i+k_3^i)& k_4^i\\
			k_3^i &-k_4^i
			\end{pmatrix}.
		\end{align}
		Consider first the case $k_4^i\neq 0$. Then, using the identities in \eqref{alpha_identities}, matrix $A$ is diagonalizable by
		\begin{align}
			\label{A_diagonalization}
			A = \underbrace{\frac{1}{k_4^i(\alpha_2^i-\alpha_1^i)}\begin{pmatrix}
				k_4^i&k_4^i\\
				\alpha_1^i+k_2^i+k_3^i&\alpha_2^i+k_2^i+k_3^i
			\end{pmatrix}}_{=:V}\underbrace{\begin{pmatrix}
			\alpha_1^i& 0\\
			0&\alpha_2^i
			\end{pmatrix}}_{=:D}\underbrace{\begin{pmatrix}
			\alpha_2^i+k_2^i+k_3^i & -k_4^i\\
			-(\alpha_1^i+k_2^i+k_3^i)&k_4^i
			\end{pmatrix}}_{=V^{-1}}
		\end{align}
		with eigenvalues $\alpha_1^i = -k^i+\sqrt{(k^i)^2-k_2^ik_4^i}$ and $\alpha_2^i = -k^i-\sqrt{(k^i)^2-k_2^ik_4^i}$ where $k^i=(k_2^i+k_3^i+k_4^i)/2$. Note that $\alpha_2^i\neq \alpha_1^i$ since $(k_2^i+k_3^i+k_4^i)^2>4k_2^ik_4^i$. By \eqref{C_integral} and \eqref{A_diagonalization} it holds true that
		\[
			C(t)=V \int_0^t \exp(D(t-s)) V^{-1}\begin{pmatrix}
				K_1^i C_\art(s)\\
				0
			\end{pmatrix}\dx s\quad \text{for} ~ t\geq0
		\]
		which implies after some calculations, again using the identities in \eqref{alpha_identities}, that
		\[
			C_\free^i(t) = \frac{K_1^i}{\alpha_2^i-\alpha_1^i}\left[(\alpha_2^i+k_2^i+k_3^i)\int_0^t e^{-\alpha_1^i (s-t)}C_\art(s)\dx s-(\alpha_1^i+k_2^i+k_3^i)\int_0^t e^{-\alpha_2^i (s-t)}C_\art(s)\dx s\right]
		\]
		\[
			\text{and} ~ ~ C_\bd^i(t)=\frac{K_1^ik_3^i}{\alpha_2^i-\alpha_1^i}\left[\int_0^t e^{-\alpha_2^i (s-t)}C_\art(s)\dx s-\int_0^t e^{-\alpha_1^i (s-t)}C_\art(s)\dx s\right]\quad \text{for} ~ t\geq 0
		\]
		and, finally, the assertion of the lemma due to $C_\tis^i = C_\free^i+C_\bd^i$ as claimed. If $k_4^i=0$ the equation for $C_\free^i$ in \eqref{eq:odesystem} immediately yields 
		\[
			C_\free^i(t)=K_1^i\int_0^te^{(k_2^i+k_3^i)(s-t)}C_\art(s)\dx s\quad \text{for} ~ t\geq 0
		\]
		which, under $k_2^i+k_3^i\neq 0$ by assumption as we are supposing $k_4^i=0$, implies that
		\[
			C_\bd^i(t)=-\frac{K_1^ik_3^i}{k_2^i+k_3^i}\int_0^te^{(k_2^i+k_3^i)(s-t)}C_\art(s)\dx s+\frac{K_1^ik_3^i}{k_2^i+k_3^i}\int_0^t C_\art(s)\dx s\quad \text{for} ~ t\geq 0
		\]
		and again the assertion of the lemma by $C_\tis^i = C_\free^i+C_\bd^i$ and Lemma \ref{cor:alphas}.
	\end{proof}
	In case $C_\art$ is modeled as a polyexponential function we obtain an explicit representation.
	\begin{lemma}
		\label{lem:solution_representation_polyexponential}
		Let $C_\art(t) = \sum_{j=1}^p \lambda_j e^{\mu_j t}$ for $(\lambda_j)_{j=1}^p, (\mu_j)_{j=1}^p\subset \mathbb{R}$. For $x,y\in \mathbb{R}$ denote $\mathbf{1}_{\left\{x\neq y\right\}}=1$ if $x\neq y$ and $\mathbf{1}_{\left\{x\neq y\right\}}=0$ if $x= y$.
		Then $(C_\tis^i)_{i=1}^n$ of Lemma
		\ref{lem:ode_solution_general} is given as
		\begin{multline}
			\label{ctextended}
			C_\tis^i(t) =\frac{K_1^i}{\alpha_2^i-\alpha_1^i}\sum_{j=1}^p\left(\frac{\alpha_2^i+k_2^i}{\mu_j-\alpha_1^i}\mathbf{1}_{\left\{\mu_j\neq \alpha_1^i\right\}}-\frac{\alpha_1^i+k_2^i}{\mu_j-\alpha_2^i}\mathbf{1}_{\left\{\mu_j\neq \alpha_2^i\right\}}\right)\lambda_je^{\mu_j t}\\
			+\left[K_1^i\frac{\alpha_1^i+k_2^i}{\alpha_2^i-\alpha_1^i}\sum_{\substack{j=1 \\ \mu_j\neq \alpha_2^i}}^p\frac{\lambda_j}{\mu_j-\alpha_2^i}\right]e^{\alpha_2^i t}-\left[K_1^i\frac{\alpha_2^i+k_2^i}{\alpha_2^i-\alpha_1^i}\sum_{\substack{j=1 \\ \mu_j\neq \alpha_1^i}}^p\frac{\lambda_j}{\mu_j-\alpha_1^i}\right]e^{\alpha_1^it}\\
			+\left[K_1^i\frac{\alpha_2^i+k_2^i}{\alpha_2^i-\alpha_1^i}\sum_{\substack{j=1 \\ \mu_j= \alpha_1^i}}^p\lambda_j\right]te^{\alpha_1^i t}-\left[K_1^i\frac{\alpha_1^i+k_2^i}{\alpha_2^i-\alpha_1^i}\sum_{\substack{j=1 \\ \mu_j= \alpha_2^i}}^p\lambda_j\right]te^{\alpha_2^it}\quad \text{for} ~ t\geq 0.
		\end{multline}
	\end{lemma}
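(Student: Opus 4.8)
The plan is to start from the integral representation \eqref{eq:ct_representation_general} of Lemma \ref{lem:ode_solution_general}, substitute the polyexponential ansatz $C_\art(s)=\sum_{j=1}^p\lambda_j e^{\mu_j s}$, and then evaluate the resulting elementary integrals explicitly using linearity. The entire content of the lemma reduces to computing, for a fixed exponent $\alpha\in\{\alpha_1^i,\alpha_2^i\}$ and each $j$, the integral
\[
\int_0^t e^{-\alpha (s-t)}e^{\mu_j s}\dx s = e^{\alpha t}\int_0^t e^{(\mu_j-\alpha)s}\dx s.
\]

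The key step is to distinguish the non-resonant case $\mu_j\neq\alpha$, where the inner integral evaluates to $(e^{(\mu_j-\alpha)t}-1)/(\mu_j-\alpha)$ and hence the whole expression equals $(e^{\mu_j t}-e^{\alpha t})/(\mu_j-\alpha)$, from the resonant case $\mu_j=\alpha$, where the integrand is constant and one obtains $t e^{\alpha t}$. This is precisely where the indicator functions $\mathbf{1}_{\{\mu_j\neq\alpha_1^i\}}$ and $\mathbf{1}_{\{\mu_j\neq\alpha_2^i\}}$ and the terms $t e^{\alpha_1^i t}$, $t e^{\alpha_2^i t}$ in \eqref{ctextended} originate. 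I would carry out this case distinction once for $\alpha=\alpha_1^i$ and once for $\alpha=\alpha_2^i$, weighting the former by $(\alpha_2^i+k_2^i)$ and the latter by $-(\alpha_1^i+k_2^i)$, with the common prefactor $K_1^i/(\alpha_2^i-\alpha_1^i)$ carried along.

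The remaining work is bookkeeping: after substitution I would group the resulting terms according to their time dependence, namely the coefficients of $e^{\mu_j t}$ (collecting, for each $j$, the non-resonant contributions from both integrals, which yields the first line of \eqref{ctextended}), the coefficients of the pure exponentials $e^{\alpha_1^i t}$ and $e^{\alpha_2^i t}$ (the $-e^{\alpha t}$ pieces of the non-resonant integrals, summed over the $j$ with $\mu_j\neq\alpha$, giving the second line), and finally the coefficients of the secular terms $t e^{\alpha_1^i t}$ and $t e^{\alpha_2^i t}$ (arising from the resonant indices, giving the third line). Matching signs and index restrictions against \eqref{ctextended} then completes the proof.

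I expect the main obstacle to be purely organizational rather than conceptual: since each $\mu_j$ may coincide with $\alpha_1^i$, with $\alpha_2^i$, or with neither, one must keep the restricted summation ranges $\mu_j\neq\alpha_1^i$, $\mu_j=\alpha_1^i$, etc., consistent throughout and verify that every term lands in exactly one of the five groups. Provided the case distinction in the elementary integral is handled cleanly, no analytic difficulty arises, and the formula \eqref{ctextended} follows by direct comparison of coefficients.
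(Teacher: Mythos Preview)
Your proposal is correct and matches the paper's approach exactly: the paper's proof is the single sentence ``This is an immediate consequence of \eqref{eq:ct_representation_general} in Lemma~\ref{lem:ode_solution_general} and the representation of $C_\art$,'' and your plan simply spells out the elementary integral computation and case distinction that this sentence leaves implicit.
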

	\begin{proof}
		This is an immediate consequence of \eqref{eq:ct_representation_general} in Lemma \ref{lem:ode_solution_general} and the representation of $C_\art$.
	\end{proof}

	\section{Unique identifiability} \label{sec:unique_identifiability}
	
	The result in Lemma \ref{lem:solution_representation_polyexponential} shows that the tracer concentration in tissue $C_\tis^i$ is given by a sum of polynomials (affine linear) scaled by exponentials in case the arterial plasma concentration $C_\art$ is polyexponential. Thus, a first crucial step towards unique identifiability of
	the parameters $K_1^i$, $k_2^i$, $k_3^i$ and $k_4^i$ from measurements of
	$C_\tis^i(t_l)$ at time points $t_1,\ldots,t_T$, analogous to \cite[Lemma 9]{Holler_2024}, is given by the following interpolation result. 

	\begin{lemma}[Unique interpolation]
		\label{lem:polyexpinter}
		Let \(m_1,\dots, m_p, T\in \mathbb{N}\) be such that \[
		2\left(m_1+\dots+m_p\right)\leq T.\]
		and \(\left(t_i, s_i\right)\in \mathbb{R}^2\),
		$i=1,\ldots,T$, with \(t_1< \dots< t_T\). Then for two functions $G, \tilde{G}$ of the form
		\[G(t) = \sum_{j=1}^p P_j\left(t\right)e^{\mu_j t} \quad \text{and}\quad \tilde G(t) = \sum_{j=1}^p \tilde P_j\left(t\right)e^{\tilde \mu_j t}
		\]
		with $P_j, \tilde{P}_j$ polynomials of degree $m_j-1$ for $1\leq j\leq p$, pairwise distinct $(\mu_j)_{j=1}^p \subset \mathbb{R}$ and pairwise distinct $(\tilde\mu_j)_{j=1}^p \subset \mathbb{R}$ it follows from the interpolation condition
		\begin{equation*}
			G(t_l) = s_l = \tilde{G}(t_l)
		\end{equation*}
		for $l=1,\ldots,T$ that $P_j \equiv \tilde{P_j}$ up to re-indexing,
		for all $j$ and $\mu_j = \tilde{\mu}_j$ for all $j$ where $P_j \not\equiv 0$.
	\end{lemma}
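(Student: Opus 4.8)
The plan is to reduce the statement to the identity $G\equiv\tilde G$ and then exploit linear independence of exponential monomials. Set $H:=G-\tilde G$. By the interpolation hypothesis $H(t_l)=0$ for $l=1,\ldots,T$, so $H$ has at least $T$ distinct real zeros. On the other hand $H$ is a real exponential polynomial: collecting the terms of $G$ and $\tilde G$ that share a common exponent among $\mu_1,\ldots,\mu_p,\tilde\mu_1,\ldots,\tilde\mu_p$, one may write $H(t)=\sum_k Q_k(t)e^{\nu_k t}$ with pairwise distinct $\nu_k$ and polynomials $Q_k$. Since $G$ is spanned by the $\sum_j m_j$ functions $t^a e^{\mu_j t}$ ($0\leq a\leq m_j-1$) and $\tilde G$ likewise, $H$ is a linear combination of at most $2\sum_j m_j$ distinct functions of the form $t^a e^{\nu t}$.

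The central tool, and the step I expect to be the main obstacle, is the classical bound on the real zeros of an exponential polynomial: a nonzero real function $\sum_k Q_k(t)e^{\nu_k t}$ with pairwise distinct $\nu_k$ and $\deg Q_k=d_k$ has at most $\bigl(\sum_k(d_k+1)\bigr)-1$ distinct real zeros. I would prove this by induction on the number $N:=\sum_k(d_k+1)$ of participating basis functions $t^a e^{\nu_k t}$, using iterated Rolle's theorem. After multiplying by $e^{-\nu_1 t}$ (which alters no zeros) so that the first exponent becomes $0$, differentiating $d_1+1$ times annihilates the resulting polynomial term $Q_1$ while, because every remaining exponent $\nu_k-\nu_1$ is nonzero, preserving the degree of each surviving polynomial factor. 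Between consecutive distinct zeros each differentiation produces at least one zero, so a function with $Z$ distinct zeros yields a reduced exponential polynomial with $N-(d_1+1)$ basis functions and at least $Z-(d_1+1)$ zeros; the induction hypothesis then forces $Z\leq N-1$. The delicate point is precisely the bookkeeping that differentiation does not lower the degrees of the surviving coefficients, which is exactly where the nonvanishing of $\nu_k-\nu_1$ enters; note also that working with distinct zeros throughout avoids any need to track multiplicities.

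Applying this bound to $H$, whose number of basis functions satisfies $N\leq 2\sum_j m_j\leq T$: were $H\not\equiv 0$, it would have at most $N-1<T$ distinct real zeros, contradicting that it vanishes at the $T$ distinct points $t_l$. Hence $H\equiv 0$, i.e. $G\equiv\tilde G$ on $\mathbb{R}$. I would finish by invoking the linear independence of the family $\{t^a e^{\nu t}\}$ over distinct pairs $(\nu,a)$, which is an immediate consequence of the zero bound just established. Matching coefficients of a common exponent $\nu$ in $G\equiv\tilde G$: if $\nu=\mu_j$ with $P_j\not\equiv 0$, then some $\tilde\mu_{j'}=\nu$ carries $\tilde P_{j'}=P_j$, and symmetrically; thus the nonzero $P_j$ are, up to re-indexing, exactly the nonzero $\tilde P_{j'}$, with equal exponents $\mu_j=\tilde\mu_{j'}$ and identical polynomials (the degrees then agreeing forces $m_j=m_{j'}$ for the matched indices). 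The remaining indices, carrying $P_j\equiv 0$ respectively $\tilde P_{j'}\equiv 0$, are matched trivially, their exponents being unconstrained, which is precisely why the conclusion $\mu_j=\tilde\mu_j$ is asserted only where $P_j\not\equiv 0$. This yields the claimed identification.
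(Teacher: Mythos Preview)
Your argument is correct: the Rolle-type induction you outline is exactly the standard proof that the family $\{t^a e^{\nu t}\}$ forms an extended Chebyshev system, and the zero count $N-1$ then forces $H\equiv 0$ as you say. The paper itself gives no proof here; it simply cites \cite[Lemma~9]{Holler_2024}, and your approach is the expected one (and almost certainly the one in that reference). One small remark: in the final matching step you do not actually need the parenthetical about ``the degrees then agreeing forces $m_j=m_{j'}$''; the lemma only asserts equality of the polynomials after re-indexing, and since the degree hypothesis should be read as ``degree at most $m_j-1$'' (otherwise the clause about $P_j\equiv 0$ would be vacuous), no degree compatibility needs to be checked.
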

	\begin{proof}
		See the proof of \cite[Lemma 9]{Holler_2024}.
	\end{proof}
	The parameters that we aim to identify are summarized under the following notation.
	\begin{definition}[Parameter configuration] We call the parameters $p,n\in \N$, $((\lambda_j,\mu_j))_{j=1}^p \in \R^{2 \times p} $, $((K_1^i,k_2^i,k_3^i,k_4^i))_{i=1}^n \in \R^{4 \times n}_{\geq 0}$ together with the functions $(C^i_\tis)_{i=1}^n$ and
		\[ C_\art(t) = \sum_{j=1}^p \lambda_j e^{\mu_jt} \]
		a configuration of the reversible two tissue compartment model if $\lambda_j
		\neq 0$ for $j=1,\ldots,p$, the $\mu_j$, $j=1,\ldots,p$, are pairwise distinct,
		and, for $i=1,\ldots,n$, $C_\tis^i = C_\free^i + C_\bd^i$ with
		$(C_\free^i,C_\bd^i)$ the solution of the ODE system \eqref{eq:odesystem} with
		arterial concentration $C_\art$ and parameters $K_1^i,k_2^i,k_3^i,k_4^i$.

	\end{definition}
	
	We verify the unique identifiability result based on the following technical assumption on a parameter configuration $(p,n,((\lambda_j,\mu_j))_{j=1}^p
	,((K_1^i,k_2^i,k_3^i,k_4^i))_{i=1}^n,(C^i_\tis)_{i=1}^n,C_\art)$.
	\begin{mdframed}
	For any $1\leq j_0\leq p$, there are at least three different regions $1\leq i_s\leq n$, $s\in\left\{1,2,3\right\}$, where:
	\begin{equation} \label{ass:main_technical_ass} \tag{A}	
		\left\{\parbox{0.91\textwidth}{%
			\begin{itemize}[leftmargin=0.4cm]
				\item Each set $\left\{k^{i_s}_3+k_4^{i_s}\right\}_{s=1}^3$, $\left\{\alpha_1^{i_s}\right\}_{s=1}^3, \left\{\alpha_2^{i_s}\right\}_{s=1}^3$ has exactly three distinct elements
				\item It holds $\mu_{j_0} + k_3^{i_s}+k_4^{i_s} \neq 0$ for $s\in\left\{1,2,3\right\}$
				\item For each $s\in\left\{1,2,3\right\}$ either $\mu_{j_0} =\alpha_1^{i_s}$ or $\sum_{j: \mu_{j}\neq \alpha_1^{i_s}} \frac{\lambda_j}{\mu_j-\alpha_1^{i_s}} \neq 0$ applies
				\item For each $s\in\left\{1,2,3\right\}$ either $\mu_{j_0} =\alpha_2^{i_s}$ or $\sum_{j: \mu_{j}\neq \alpha_2^{i_s}} \frac{\lambda_j}{\mu_j-\alpha_2^{i_s}} \neq 0$ applies
			\end{itemize}%
		}\right.
	\end{equation}
\end{mdframed}
While Assumption \eqref{ass:main_technical_ass}, as required by our proof, is rather technical, the following lemma shows that a sufficient number of regions with different parameters (in a certain sense) is sufficient to ensure this assumption.
This makes sense also practically, since regions with similar parameters are essentially the same and, as can be seen from the solution formula of Lemma \ref{lem:solution_representation_polyexponential}, do not enrich the dynamics of the model at all.

	\begin{lemma} \label{lem:simpler_version_of_main_assumption}
		Assume that there are at least $p+3$ regions $i_{1},\ldots,i_{p+3}$, with $p\geq 1$, such that the set $\left\{k^{i_s}_3+k^{i_s}_4\right\}_{1\leq s\leq p+3}$ has exactly $p+3$ different elements and the set $\left\{\alpha_1^{i_s}\right\}_{1\leq s\leq p+3}\cup\left\{\alpha_2^{i_s}\right\}_{1\leq s\leq p+3}$ has exactly $2p+6$ different elements. Then Assumption \eqref{ass:main_technical_ass} holds true.
		\begin{proof}
			For $z \in \R$, note that
			\[
			\left( \prod_{\substack{j=1 \\ \ z \neq \mu_j}}^p {\mu_j-z}  \right)\left( \sum_{\substack{i=1 \\ z\neq \mu_i}}^p \frac{\lambda_i}{ \mu_i-z} \right) =
			\sum_{\substack{i=1 \\ z \neq \mu_i}}^p \lambda _i
			\prod_{\substack{j=1 \\ j \neq i \\ z \neq \mu_j}}^p (\mu_j-z)
			\]
			is a polynomial in $z$ of degree at most $p-1$. Hence, it can admit at most
			$p-1$ distinct roots. Now since there are at least $p+3$ regions where $\alpha_j^{i_s}$ are jointly pairwise distinct for $j\in\left\{1,2\right\}$, for at least
			four of them, say $i_1,\ldots,i_4$, $z\in\left\{\alpha_1^{i_s}\right\}_{1\leq s\leq 4}\cup \left\{\alpha_2^{i_s}\right\}_{1\leq s\leq 4}$ cannot be a root
			of the above polynomial. Further, for those four regions, since the $k^{i_s}_3+k_4^{i_s}$
			are pairwise distinct, for any given $\mu_{j_0}$, at most one region $i_s$ can be such that
			$\mu_{j_0} + k_3^{i_s}+k_4^{i_s} = 0$. As a consequence, the remaining three are such
			that the conditions of Assumption \eqref{ass:main_technical_ass} hold true.
		\end{proof}
	\end{lemma}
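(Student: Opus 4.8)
The plan is to reduce the two summation conditions of \eqref{ass:main_technical_ass} (its third and fourth bullets) to a root count for one fixed polynomial, and then to conclude by elementary counting over the $p+3$ regions. First I would fix $1\le j_0\le p$ and, for $z\in\R$ distinct from all $\mu_j$, clear denominators in $\sum_{j:\mu_j\neq z}\lambda_j/(\mu_j-z)$: multiplying by $\prod_{j}(\mu_j-z)$ gives the polynomial $Q(z)=\sum_{i=1}^p\lambda_i\prod_{j\neq i}(\mu_j-z)$ of degree at most $p-1$, which is precisely the identity written out in the statement. Since $Q\not\equiv 0$ (its value at any $\mu_k$ is $\lambda_k\prod_{j\neq k}(\mu_j-\mu_k)\neq 0$), the sum can vanish for at most $p-1$ arguments $z$, namely at the roots of $Q$.

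Next I would carry out the count. By hypothesis the $2(p+3)$ numbers $\alpha_1^{i_s},\alpha_2^{i_s}$ are pairwise distinct, so at most $p-1$ of them are roots of $Q$; as each region supplies exactly two of these distinct numbers, at most $p-1$ regions can have one of their two $\alpha$-values among the roots. Discarding these leaves at least $(p+3)-(p-1)=4$ regions for which the third and fourth bullets hold. Among those four, the second bullet $\mu_{j_0}+k_3^{i_s}+k_4^{i_s}\neq 0$ fails for at most one region, since the values $k_3^{i_s}+k_4^{i_s}$ are pairwise distinct and thus equal $-\mu_{j_0}$ for at most one index. This leaves at least three regions, and the first bullet holds automatically for any three of the given regions because $\{k_3^{i_s}+k_4^{i_s}\}$, $\{\alpha_1^{i_s}\}$ and $\{\alpha_2^{i_s}\}$ are each pairwise distinct by assumption; hence these three regions verify \eqref{ass:main_technical_ass}. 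Note that the count is tight: the $p-1$ roots together with the one bullet-two region exhaust exactly the slack $(p+3)-3=p$.

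The step I expect to be the main obstacle is precisely the implication ``$z$ is not a root of $Q$'' $\Rightarrow$ ``the sum is nonzero''. It is immediate when $z\neq\mu_j$ for all $j$, but it breaks at a coincidence $z=\mu_k$, where clearing denominators removes the pole and $Q(\mu_k)\neq 0$ says nothing about the regularized value $\sum_{j\neq k}\lambda_j/(\mu_j-\mu_k)$. The clause ``either $\mu_{j_0}=\alpha$'' absorbs the single coincidence with $\mu_{j_0}$, so the genuine difficulty is an $\alpha$-value coinciding with some $\mu_k$, $k\neq j_0$; because the count above has no remaining slack, each such coincidence must be shown harmless. I would address this by separating the $\alpha$-values, which are fixed by $k_2,k_3,k_4$ through Lemma \ref{cor:alphas}, from the prescribed exponents $\mu_k$, or else by replacing $Q$ with a polynomial whose zero set also records the regularized values at the $\mu_k$, so that the total number of bad arguments still stays within the available slack. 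Granting the root bound for all relevant arguments, the remainder is the pigeonhole counting above.
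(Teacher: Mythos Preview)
Your first two paragraphs reproduce the paper's proof essentially verbatim: clear denominators to obtain a degree-$(p-1)$ polynomial, bound its root count by $p-1$, observe that at most $p-1$ of the $2(p+3)$ pairwise distinct $\alpha$-values can be roots, hence at least four regions survive, and then discard at most one of those via the second bullet. Your additions ($Q\not\equiv 0$ because $Q(\mu_k)=\lambda_k\prod_{j\neq k}(\mu_j-\mu_k)\neq 0$, and the explicit verification of the first bullet) are refinements the paper leaves implicit.

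The subtlety you flag in your third paragraph is real, and the paper's proof does not address it either. The paper writes the cleared expression with the constraints $z\neq\mu_j$ built into the summation and product indices and then declares it ``a polynomial in $z$ of degree at most $p-1$''. Strictly, that identification with the single polynomial $Q(z)=\sum_i\lambda_i\prod_{j\neq i}(\mu_j-z)$ is valid only for $z\notin\{\mu_1,\dots,\mu_p\}$; at $z=\mu_k$ one has $Q(\mu_k)=\lambda_k\prod_{j\neq k}(\mu_j-\mu_k)\neq 0$, whereas the quantity actually needed is the regularized sum $\sum_{j\neq k}\lambda_j/(\mu_j-\mu_k)$, which is unrelated to $Q(\mu_k)$. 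Since the disjunct ``$\mu_{j_0}=\alpha$'' only absorbs the coincidence with the single index $j_0$, any coincidence $\alpha_\ell^{i_s}=\mu_k$ with $k\neq j_0$ is not covered by the root bound, and, as you note, the counting has no slack to absorb such cases. So you have correctly identified a point that the paper's proof glosses over; your proposed repairs (separating the $\alpha$'s from the $\mu_k$, or constructing a polynomial whose zero set also captures the regularized values at the $\mu_k$) go beyond what the paper does.
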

	Note that the above result also holds true if there are at least $2p+2$ regions $i_1,\dots, i_{2p+2}$ such that $\left\{\alpha_1^{i_s}\right\}_{1\leq s\leq 2p+2}$, $\left\{\alpha_2^{i_s}\right\}_{1\leq s\leq 2p+2}$ each have exactly $2p+2$ different elements. For the practical perspective, it is interesting to consider the result of Lemma \ref{lem:simpler_version_of_main_assumption} also from a probabilistic perspective:
		\begin{remark}[Assumption on different regions] \label{rem:probabilistic_interpretation}
		While it is of course not possible in practice to ensure the conditions of Lemma \ref{lem:simpler_version_of_main_assumption} (and hence Assumption \ref{ass:main_technical_ass}) on the ground truth parameter configuration, it is actually a condition that can be expected to hold already whenever the measurements comprise at least $p+3$ different regions: Assume that the metabolic tissue parameters are realizations of real-valued random variables that admit a density with respect to the standard Lebesgue measure (e.g., the parameters could be Gaussian distributed or even mixtures of Gaussians resulting from different patient classes in the population). Then, for each $K_1,k_2,k_3,k_4$, the probability that they admit a fixed, single value is zero, i.e., $\mathbb{P}(K_1=c)=0$ for any $c \in \R$ and similarly for $k_2$, $k_3$ and $k_4$. For two random samples of $k_3+k_4$, denoted by $k_3^1+k_4^1$ and $k_3^2+k_4^2$, it hence follows that
		\[
			\mathbb{P}((k_3^1+k_4^1)-(k_3^2+k_4^2)=0)=0,
		\]
		such that $p+3$ many realizations of $k_3+k_4$ are different almost surely. 
		Regarding the condition on $\alpha_1,\alpha_2$ we can argue similarly: Since the functions $\alpha^+$ and $\alpha^-$ given by
		\[
			\alpha^\pm: \mathbb{R}_+^3\to \mathbb{R}, ~ (k_2,k_3,k_4)\mapsto -(k_2+k_3+k_4)/2\pm\sqrt{(k_2+k_3+k_4)^2/4-k_2k_4}
		\]
		defining $\alpha_1,\alpha_2$ as in Lemma \ref{cor:alphas} are continuous, for random samples of $(k_2,k_3,k_4)$ denoted by $(k_2^1,k_3^1,k_4^1)$ and $(k_2^2,k_3^2,k_4^2)$ it again follows that
		\[
			\mathbb{P}(\alpha^+(k_2^1,k_3^1,k_4^1)-\alpha^+(k_2^2,k_3^2,k_4^2)=0)= 0 \quad \text{and} \quad \mathbb{P}(\alpha^+(k_2^1,k_3^1,k_4^1)-\alpha^-(k_2^2,k_3^2,k_4^2)=0)= 0
		\]
		such that $p+3$ many realizations of $\alpha_1, \alpha_2$ are jointly pairwise distinct almost surely.
		
		Thus, in summary, if the ground truth parameters of a specific patient result from a (reasonably distributed) random variable, the conditions of Lemma \ref{lem:simpler_version_of_main_assumption} (and hence Assumption \ref{ass:main_technical_ass}) can be expected to hold true whenever $p+3$ different regions are measured.
	\end{remark}	
	Next we show that the parameters of the ODE system \eqref{eq:odesystem} can be uniquely identified from time-discrete measurements $C_\tis^i(t_1), \ldots,C_\tis^i(t_T)$ with $i=1,\ldots,n$ and $C_\tot(s_1),\ldots,C_\tot(s_q)$, essentially under Assumption \eqref{ass:main_technical_ass} using the interpolation result in Lemma \ref{lem:polyexpinter}. In subsequent results we will infer interpretations on unique identifiability from a practical point of view.
	\begin{proposition} \label{prop:main_uniqueness_result}
		Let $(p,n,((\lambda_j,\mu_j))_{j=1}^p ,((K_1^i,k_2^i,k_3^i,k_4^i))_{i=1}^n,(C^i_\tis)_{i=1}^n,C_\art)$ be a configuration of the reversible two tissue compartment model with $p \geq 4$, $n \geq 3$, $K_1^i,k_2^i,k_3^i,k_4^i > 0$ for all $i=1,\ldots,n$ and such that Assumption \eqref{ass:main_technical_ass} holds. Let further $t_1,\ldots,t_T>0$ be different such that
		\[ T \geq 2(p+4) .\]
		Then, with $(\tilde p,n,((\tilde \lambda_j,\tilde \mu_j))_{j=1}^{\tilde p}
		,((\tilde K_1^i,\tilde k_2^i,\tilde k_3^i, \tilde k_4^i))_{i=1}^n,(\tilde
		C^i_\tis)_{i=1}^n,\tilde C_\art)$ any other configuration of the reversible
		two tissue compartment model such that $\tilde{p}\leq p$ and $\tilde k_3^i>0$ for $i=1,\dots, n$, it follows from
		$C^i_\tis(t_l) = \tilde C^i_\tis(t_l)$ for $l=1,\ldots, T$ and $i=1,\dots, n$, that
		\[ \tilde k_2^i = k_2^i,\quad \tilde k_3^i = k_3^i\quad \text{ and }\quad \tilde k_4^i = k_4^i
		\]
		for all $i=1,\ldots,n$, that there exists a constant $\zeta \neq 0$ such that
		\[ K_1^i = \zeta  \tilde  K_1^i \]
		for all $i=1,\ldots,n$, that $p= \tilde{p}$ and that (up to re-indexing)
		\[ \tilde \mu_j = \mu_j \text{ and } \tilde \lambda_j = \zeta \lambda_j \text{ for all }j=1,\ldots,p.
		\]
	\end{proposition}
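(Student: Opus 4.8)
The plan is to reduce everything to the unique interpolation result of Lemma~\ref{lem:polyexpinter}, applied separately in each region $i$. First I would use Lemma~\ref{lem:solution_representation_polyexponential} to write $C_\tis^i$ as a polyexponential whose frequencies lie in $\{\mu_1,\dots,\mu_p\}\cup\{\alpha_1^i,\alpha_2^i\}$, with attached polynomials of degree at most $1$, a linear factor $t$ occurring only at a frequency where some $\mu_j$ coincides with $\alpha_1^i$ or $\alpha_2^i$. Hence $C_\tis^i$ has at most $p+2$ distinct frequencies, of which at most two carry a degree-$1$ polynomial; writing it in the template of Lemma~\ref{lem:polyexpinter} I may take all degree bounds $m_j=1$ except two slots with $m_j=2$, so that $\sum_j m_j\le p+4$. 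The same holds for $\tilde C_\tis^i$ (here $\tilde p\le p$), and after aligning the (at most two) linear slots of the two functions the common budget is still $\sum_j m_j\le p+4$. Since $T\ge 2(p+4)\ge 2\sum_j m_j$, Lemma~\ref{lem:polyexpinter} applies and yields, for every $i$, that $C_\tis^i$ and $\tilde C_\tis^i$ have identical polyexponential representations: the frequencies carrying a nonzero coefficient coincide, together with their attached polynomials.

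Next I would disentangle the global frequencies $\mu_j$ (shared by all regions) from the region-specific eigenvalues $\alpha_1^i,\alpha_2^i$. A direct computation with \eqref{ctextended} simplifies the coefficient of $e^{\mu_{j_0}t}$ (for $\mu_{j_0}\notin\{\alpha_1^i,\alpha_2^i\}$) to
\[ K_1^i\lambda_{j_0}\,\frac{\mu_{j_0}+k_3^i+k_4^i}{(\mu_{j_0}-\alpha_1^i)(\mu_{j_0}-\alpha_2^i)}, \]
using $\alpha_1^i+\alpha_2^i=-(k_2^i+k_3^i+k_4^i)$ from Lemma~\ref{cor:alphas}; this is nonzero precisely when $\mu_{j_0}+k_3^i+k_4^i\ne0$, i.e. exactly under the second bullet of Assumption~\eqref{ass:main_technical_ass}, while the third and fourth bullets (together with $\alpha_2^i<-k_2^i<\alpha_1^i$ from Lemma~\ref{cor:alphas}) guarantee that $\alpha_1^i$ and $\alpha_2^i$ themselves appear with nonzero coefficient. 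Fixing $j_0$, Assumption~\eqref{ass:main_technical_ass} provides three regions $i_1,i_2,i_3$ in which $\mu_{j_0}$ is a genuine frequency and whose eigenvalues $\alpha_1^{i_s},\alpha_2^{i_s}$ are pairwise distinct. The matched tilde-representation then forces $\mu_{j_0}\in\{\tilde\mu_{j'}\}\cup\{\tilde\alpha_1^{i_s},\tilde\alpha_2^{i_s}\}$ in each of the three regions, and I would argue that $\mu_{j_0}$ cannot be absorbed as a local eigenvalue $\tilde\alpha$ in all three simultaneously, so that $\mu_{j_0}=\tilde\mu_{j'}$ for some $j'$. By symmetry of the two configurations and $\tilde p\le p$ this gives $p=\tilde p$ and, after re-indexing, $\mu_j=\tilde\mu_j$ for all $j$; removing these shared frequencies in each region leaves $\{\alpha_1^i,\alpha_2^i\}=\{\tilde\alpha_1^i,\tilde\alpha_2^i\}$, and the ordering $\alpha_2^i<\alpha_1^i$ from Lemma~\ref{cor:alphas} upgrades this to $\alpha_1^i=\tilde\alpha_1^i$ and $\alpha_2^i=\tilde\alpha_2^i$.

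Finally I would recover the kinetic parameters. The eigenvalue identities of Lemma~\ref{cor:alphas} immediately give the two symmetric relations $k_2^i+k_3^i+k_4^i=\tilde k_2^i+\tilde k_3^i+\tilde k_4^i$ and $k_2^ik_4^i=\tilde k_2^i\tilde k_4^i$. For the missing third relation I would return to the matched coefficient of $e^{\mu_{j_0}t}$, which (with $\mu_j,\alpha_1^i,\alpha_2^i$ now identified) reads
\[ K_1^i\lambda_{j_0}\,(\mu_{j_0}+k_3^i+k_4^i)=\tilde K_1^i\tilde\lambda_{j_0}\,(\mu_{j_0}+\tilde k_3^i+\tilde k_4^i) \]
for all admissible $(i,j_0)$. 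Coupling these identities across the at-least-three good regions supplied by Assumption~\eqref{ass:main_technical_ass} and across different $j_0$ --- exploiting that the $\lambda_j$ are shared by all regions --- I expect to extract a single nonzero constant $\zeta$ with $\tilde\lambda_j=\zeta\lambda_j$ and $K_1^i=\zeta\tilde K_1^i$, whereupon the displayed identity collapses to $k_3^i+k_4^i=\tilde k_3^i+\tilde k_4^i$. Combined with the two symmetric relations this yields $k_2^i=\tilde k_2^i$ (by subtraction), then $k_4^i=\tilde k_4^i$ (from the product, using $k_2^i>0$), and finally $k_3^i=\tilde k_3^i$, for every $i$.

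I expect the two genuinely hard points to be, first, the separation step: turning ``$\mu_{j_0}$ appears in three regions with pairwise distinct local eigenvalues'' into ``$\mu_{j_0}$ is a global frequency $\tilde\mu_{j'}$'' requires a careful pigeonhole/counting argument that fully exploits the three-distinct-region structure of Assumption~\eqref{ass:main_technical_ass} and the symmetry between the two configurations, since a priori $\mu_{j_0}$ could masquerade as a tilde-eigenvalue in any single region. Second, isolating the \emph{one} global constant $\zeta$ from the region-coupled coefficient relations is delicate because each region only constrains the ratios $\tilde\lambda_{j_0}/\lambda_{j_0}$ through a region-dependent M\"obius factor; making these compatible across regions and frequencies, rather than region-by-region, is where the bulk of the ``extensive analysis'' will lie.
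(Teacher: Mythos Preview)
Your overall architecture --- reduce to Lemma~\ref{lem:polyexpinter} region by region, separate the global frequencies $\mu_j$ from the local eigenvalues $\alpha_1^i,\alpha_2^i$, then read off the kinetic parameters from matched coefficients --- is the same as the paper's. However, there is a genuine gap in the middle of your outline.

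You write that once $\mu_j=\tilde\mu_j$ for all $j$, ``removing these shared frequencies in each region leaves $\{\alpha_1^i,\alpha_2^i\}=\{\tilde\alpha_1^i,\tilde\alpha_2^i\}$''. This is not justified for a \emph{general} region $i$. Assumption~\eqref{ass:main_technical_ass} only guarantees, for each fixed $j_0$, three special regions in which the coefficients attached to $e^{\alpha_1^{i_s}t}$ and $e^{\alpha_2^{i_s}t}$ are nonzero (or the corresponding $t$-term is present). For an arbitrary region $i$ not covered by the assumption, the sum $\sum_{j:\mu_j\ne\alpha_1^i}\lambda_j/(\mu_j-\alpha_1^i)$ may well vanish, so $\alpha_1^i$ need not occur as an active frequency in $C_\tis^i$ at all, and likewise $\tilde\alpha_1^i$ need not occur in $\tilde C_\tis^i$. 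Hence interpolation gives you no handle on $\alpha_1^i,\alpha_2^i$ versus $\tilde\alpha_1^i,\tilde\alpha_2^i$ in such a region, and your subsequent derivation of $k_2^i=\tilde k_2^i$, $k_3^i=\tilde k_3^i$, $k_4^i=\tilde k_4^i$ collapses there. The paper avoids this by a two-stage argument: it first proves $\alpha_r^{i}=\tilde\alpha_r^{i}$ and $k_r^{i}=\tilde k_r^{i}$ \emph{only} for the three special regions supplied by \eqref{ass:main_technical_ass}, uses those to pin down the single constant $\zeta$ with $\tilde\lambda_j=\zeta\lambda_j$, and \emph{then} treats an arbitrary region via the full coefficient identity
\[
\frac{\zeta\tilde K_1(\mu_j+\tilde k_3+\tilde k_4)}{(\mu_j-\tilde\alpha_1)(\mu_j-\tilde\alpha_2)}=\frac{K_1(\mu_j+k_3+k_4)}{(\mu_j-\alpha_1)(\mu_j-\alpha_2)},
\]
which does \emph{not} presuppose $\alpha_r=\tilde\alpha_r$, through a rather lengthy case analysis. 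Your outline skips this entire second stage.

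A secondary issue concerns the separation step itself. You invoke ``symmetry of the two configurations'', but the two configurations are not symmetric: only the ground-truth one is assumed to satisfy \eqref{ass:main_technical_ass}, while for the tilde configuration you merely know $\tilde k_3^i>0$. In particular you have no a priori control over the $\tilde\alpha_r^{i_s}$ (they need not be pairwise distinct across $s$), so the pigeonhole ``$\mu_{j_0}$ cannot be a local $\tilde\alpha$ in all three regions'' does not go through as stated. The paper's argument here is one-sided: it fixes a single region $i_0$, isolates at most three exceptional indices $j^0,j^1,j^2$ (one from a vanishing coefficient, two from possible coincidences $\mu_j=\tilde\alpha_r^{i_0}$), and then uses \eqref{ass:main_technical_ass} on the \emph{original} configuration to pick further regions that resolve these exceptions through several sub-cases. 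Your sketch would need to be reworked along these asymmetric lines.
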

	\begin{proof}
		Take $(p,n,((\lambda_j,\mu_j))_{j=1}^p ,((K_1^i,k_2^i,k_3^i, k_4^i))_{i=1}^n,(C^i_\tis)_{i=1}^n,C_\art)$ and $(\tilde p,n,((\tilde \lambda_j,\tilde \mu_j))_{j=1}^{\tilde p},$
		
		\noindent $((\tilde K_1^i,\tilde k_2^i,\tilde k_3^i, \tilde k_4^i))_{i=1}^n,(\tilde C^i_\tis)_{i=1}^n,\tilde C_\art)$ to be two configurations as stated in the proposition, with
		\begin{equation}
			\label{eq:uniquness_prop_measurment_equality}
			C_\tis^i(t_l) =  \tilde C_\tis^i(t_l)
		\end{equation}
		for $l=1,\ldots,T$ and $i=1,\dots,n$. Now the concentration in tissue  $C_\tis^i$ admits the
		representation:
			\begin{multline}
			\label{ctextended_mu_nonzero}
			C_\tis^i(t) =\frac{K_1^i}{\alpha_2^i-\alpha_1^i}\sum_{j=1}^p\left(\frac{\alpha_2^i+k_2^i}{\mu_j-\alpha_1^i}\mathbf{1}_{\left\{\mu_j\neq \alpha_1^i\right\}}-\frac{\alpha_1^i+k_2^i}{\mu_j-\alpha_2^i}\mathbf{1}_{\left\{\mu_j\neq \alpha_2^i\right\}}\right)\lambda_je^{\mu_j t}\\
			+\left[K_1^i\frac{\alpha_1^i+k_2^i}{\alpha_2^i-\alpha_1^i}\sum_{\substack{j=1 \\ \mu_j\neq \alpha_2^i}}^p\frac{\lambda_j}{\mu_j-\alpha_2^i}\right]e^{\alpha_2^i t}-\left[K_1^i\frac{\alpha_2^i+k_2^i}{\alpha_2^i-\alpha_1^i}\sum_{\substack{j=1 \\ \mu_j\neq \alpha_1^i}}^p\frac{\lambda_j}{\mu_j-\alpha_1^i}\right]e^{\alpha_1^it}\\
			+\left[K_1^i\frac{\alpha_2^i+k_2^i}{\alpha_2^i-\alpha_1^i}\sum_{\substack{j=1 \\ \mu_j= \alpha_1^i}}^p\lambda_j\right]te^{\alpha_1^i t}-\left[K_1^i\frac{\alpha_1^i+k_2^i}{\alpha_2^i-\alpha_1^i}\sum_{\substack{j=1 \\ \mu_j= \alpha_2^i}}^p\lambda_j\right]te^{\alpha_2^it}\quad \text{for} ~ t\geq 0.
		\end{multline}
	
		In particular for any region $i \in \{1,\ldots,n\}$, the coefficients of the $e^{\mu_j t}$ terms for $j=1,\ldots,p$ in this representation are given as either
		\begin{align}
			\label{coeff1}
		K_1^{i}\lambda_{j}\frac{\alpha_1^i+k_2^i}{(\alpha_2^i-\mu_j)(\alpha_2^i-\alpha_1^i)} \neq 0
		\end{align}
		in case $\mu_{j} =\alpha_1^i$, by
		\begin{align}
			\label{coeff2}
		K_1^{i}\lambda_{j}\frac{\alpha_2^i+k_2^i}{(\mu_j-\alpha_1^i)(\alpha_2^i-\alpha_1^i)} \neq 0
		\end{align}
		if $\mu_j=\alpha_2^i$, or, for $\mu_j\notin\left\{\alpha_1^i,\alpha_2^i\right\}$, using Lemma \ref{cor:alphas}, by
		\begin{align}
			\label{coeff3}
			K_1^i\lambda_j \frac{\mu_j+k_3^i+k_4^i}{(\mu_j-\alpha_1^i)(\mu_j-\alpha_2^i)}.
		\end{align}
		The latter can only be zero if $\mu_j + k_3^i+k_4^i=0$, which can happen for at most one $\mu_j$ by the $(\mu_j)_j$ being pairwise distinct.
		Since $p\geq 4$ by assumption, this implies in particular that $C_\tis^i$ is a
		nonzero function for any $i$. Also $\tilde C_\tis^i$ admits a representation as in
		\eqref{ctextended_mu_nonzero} by $\tilde k_3^i>0$, Lemma \ref{cor:alphas} and Lemma \ref{lem:ode_solution_general}. As a consequence of
		\eqref{eq:uniquness_prop_measurment_equality}, the condition \(T\geq 2(p+4)\) and the unique interpolation
		result of Lemma \ref{lem:polyexpinter}, this implies that $\tilde C_\tis^i$ is
		a nonzero function, such that in particular $\tilde K_1^i\neq 0$ for all $i\in
		\{1,\ldots,n\}$.
		
		As first step, we now aim to show that $\tilde{p} = p$ (in particular $\tilde{\lambda}_j\neq 0$ for all $j$) and that (up to re-indexing) $\mu_j=\tilde{\mu}_j$ for all $j=1,\ldots,p$. Note that by assumption $p\geq\tilde p$.
		
		\paragraph{Uniqueness of at least $p-3$ exponents $(\mu_j)_{j=1}^p$.}  We start with a region $i_0 \in \{1,\ldots,n\}$. In this region, as argued
		above, the coefficients of the $e^{\mu_j t}$ terms for $j\in\left\{1,\ldots,p\right\}$ can be zero for
		at most one $j^0\in\left\{1,\ldots,p\right\}$, i.e.,
		\begin{equation}
			\label{eq:muj0}
			\mu_{j^0}+k_3^{i_0}+k_4^{i_0}=0.
		\end{equation}
		Furthermore, there exist at most one $j^1\in\left\{1,\ldots,p\right\}$ and at most one $j^2\in\left\{1,\ldots,p\right\}$ with
		\begin{equation}
			\label{eq:muj12}
			\mu_{j^1}=\tilde\alpha_1^{i_0}\quad\text{ and }\quad \mu_{j^2}=\tilde\alpha_2^{i_0}.
		\end{equation}
		We argue first that $\tilde p\geq p-3$ and that at least $p-3$ of the $\mu_j, \tilde\mu_j$ coincide up to re-indexing.
		
		In case $\mu_j\notin\left\{\alpha_1^{i_0},\alpha_2^{i_0}\right\}$ for all $j \notin \{j^0, j^1,j^2\}$ it holds that the coefficients of the $e^{\mu_j t}$ are nonzero for $j\notin \left\{j^0, j^1,j^2\right\}$. Then the unique interpolation
		result of Lemma \ref{lem:polyexpinter} applied to $C_\tis^{i_0}$ and $\tilde
		C_\tis^{i_0}$ yields that $\tilde{p}\geq p-3 \geq 1$ and that (up to
		re-indexing) $\mu_j=\tilde{\mu}_j$ for all $j \notin \{j^0, j^1,j^2\}$.
		
		To show that the other case of $\mu_j\in\left\{\alpha_1^{i_0},\alpha_2^{i_0}\right\}$ for $j\notin\left\{j^0,j^1,j^2\right\}$ cannot occur, it suffices to show that an index $\hat{j}$ with $\mu_{\hat{j}}\in\left\{\alpha_1^{i_0},\alpha_2^{i_0}\right\}$ necessarily fulfills $\hat{j}\in\left\{j^1,j^2\right\}$. For that let w.l.o.g.  $\mu_{\hat{j}}=\alpha_1^{i_0}$. Then the representation in \eqref{ctextended_mu_nonzero} implies that the coefficient of $te^{\alpha_1^{i_0}t}$ is nonzero and by Lemma \ref{lem:polyexpinter} the coefficient of either $te^{\tilde\alpha_1^{i_0}t}$ or $te^{\tilde\alpha_2^{i_0}t}$ is nonzero too. This implies that $\mu_{\hat{j}}\in\left\{\tilde\alpha_1^{i_0},\tilde\alpha_2^{i_0}\right\}$ and hence, by \eqref{eq:muj12} that either $\hat{j}=j^1$ or $\hat{j}=j^2$ since the $(\mu_j)_j$ are pairwise distinct. %
		
		\paragraph{Uniqueness of the remaining exponents $(\mu_j)_{j=1}^p$.} We proceed by showing that $\tilde{p}=p$ and that also the remaining $\mu_{j^0}, \mu_{j^1}, \mu_{j^2}$ necessarily coincide with some $\tilde\mu_j$, respectively. As a consequence of Assumption \eqref{ass:main_technical_ass}, we can pick a
		region $i_1\neq i_0$ with 
		\begin{equation}
			\label{eq:new_region}
			k_3^{i_1}+k_4^{i_1} \neq k_3^{i_0}+k_4^{i_0}, ~ \mu_{j^1}+k_3^{i_1}+k_4^{i_1}\neq 0, ~ \mu_{j^2}+k_3^{i_1}+k_4^{i_1}\neq 0 ~ \text{ and } ~ \mu_{j^0} + k_3^{i_1}+k_4^{i_1}\neq 0,
		\end{equation}
		where the latter inequality holds since already $\mu_{j^0} + k_3^{i_0}+k_4^{i_0}=0$ by \eqref{eq:muj0}. This means that the coefficients of $e^{\mu_{j^0}t}, e^{\mu_{j^1}t}$ and $e^{\mu_{j^2}t}$ in the representation of
		$C_\tis^{i_1}$ as in \eqref{ctextended} are nonzero (see \eqref{coeff1}-\eqref{coeff3}). Again by the $(\mu_j)_j$
		being pairwise distinct, this implies that $\tilde{p}\geq p-2$ and that (up to
		re-indexing) either $\mu_{j^0} = \tilde \mu_{j^0}$ or $\mu_{j^1} =
		\tilde \mu_{j^1}$ or $\mu_{j^2} = \tilde \mu_{j^2}$. If all three equalities apply there is nothing left to show in this step. We consider the case distinction of exactly two of these equalities holding true (\nameref{mu:case1}) or only one holding true (\nameref{mu:case2}) which is possible in this reduced form as the roles of $\mu_{j^0},\mu_{j^1}, \mu_{j^2}$ are interchangeable.\vspace{-0.4cm}   %
		\paragraph{Case I.}\label{mu:case1} W.l.o.g. suppose that $\mu_{j^1} =
		\tilde \mu_{j^1}$ and $\mu_{j^2} = \tilde \mu_{j^2}$ but $\mu_{j^0}$ is different from any $\tilde \mu_j$. Now as a consequence of Assumption \eqref{ass:main_technical_ass} we can pick $i_2,i_3$ and $i_4$ to be regions where the $\left\{\alpha_1^{i_s}\right\}_{2\leq s\leq 4}$ are pairwise distinct, $\left\{\alpha_2^{i_s}\right\}_{2\leq s\leq 4}$ are pairwise distinct, and for $s\in\left\{2,3,4\right\}$ it holds $\mu_{j^0}+k_3^{i_s}+k_4^{i_s}\neq 0$ together with either $\mu_{j^0}=\alpha_i^{i_s}$ or the coefficient of $e^{\alpha_i^{i_s}t}$ in the representation of $C_\tis^{i_s}$ being nonzero for $i\in\left\{1,2\right\}$, respectively.
		
		\noindent \textbf{Case I.A} If there exist some $s\in\left\{2,3,4\right\}$ and $i\in\left\{1,2\right\}$ with $\mu_{j^0}=\alpha_i^{i_s}$ we derive that the coefficient of $te^{\alpha_i^{i_s}t}$ in the representation of $C_\tis^{i_s}$ is nonzero. Due to the unique interpolation result in Lemma \ref{lem:polyexpinter} the same necessarily holds true for $\tilde C_\tis^{i_s}$ either for $t e^{\tilde \alpha_1^{i_s}t}$ or $te^{\tilde \alpha_2^{i_s}t}$ with $\alpha_i^{i_s}\in\left\{\tilde\alpha_1^{i_s},\tilde\alpha_2^{i_s}\right\}$. This is only possible if either $\tilde \alpha_1^{i_s}$ or $\tilde \alpha_2^{i_s}$ (and thus, also $\alpha_i^{i_s}$) coincides with some $\tilde \mu_j$ which hence, necessarily coincides with $\mu_{j^0}$. This, however, is a contradiction, since by assumption $\mu_{j^0}$ is different from any $\tilde\mu_j$.
		
		\noindent \textbf{Case I.B} Suppose that $\mu_{j^0}\notin \left\{\alpha_1^{i_s},\alpha_2^{i_s}\right\}$ for all $s\in\left\{2,3,4\right\}$. As the coefficient of $e^{\mu_{j^0}t}$ is nonzero for each region $i_2, i_3, i_4$ and $\mu_{j^0}$ is different from the $\tilde \mu_j$ it holds true that 
		\begin{equation}
			\label{eq1:mu}
			\mu_{j^0}\in \left\{\tilde \alpha_1^{i_s},\tilde \alpha_2^{i_s}\right\}
		\end{equation} 
		for each $s\in\left\{2,3,4\right\}$. As $\mu_{j^0}\notin \left\{\alpha_1^{i_s},\alpha_2^{i_s}\right\}$, the coefficients of the $e^{\alpha_1^{i_s}t}$ and $e^{\alpha_2^{i_s}t}$ in the representations of $C_\tis^{i_s}$ are nonzero for $s\in\left\{2,3,4\right\}$ by Assumption \eqref{ass:main_technical_ass} at the beginning of \nameref{mu:case1}.\\
		For fixed $s\in\left\{2,3,4\right\}$ both $\alpha_1^{i_s}$ and $\alpha_2^{i_s}$ cannot coincide with some $\mu_j$, respectively. The reason is that in this case, due to the interpolation result in Lemma \ref{lem:polyexpinter} and Lemma \ref{cor:alphas}, it would hold true that $\alpha_i^{i_s}=\tilde \alpha_i^{i_s}$ for $i\in\left\{1,2\right\}$, which in turn would imply that both $\tilde\alpha_1^{i_s}$ and $\tilde\alpha_2^{i_s}$ coincide with some $\tilde \mu_j$, respectively, and hence, also $\mu_{j_0}$ by \eqref{eq1:mu}, in contradiction to $\mu_{j^0}$ being different from any $\tilde \mu_{j}$.\\
		For fixed $s\in\left\{2,3,4\right\}$ if only $\alpha_1^{i_s}$ coincides with some $\mu_j$ then w.l.o.g. we derive $\alpha_1^{i_s}=\tilde \alpha_1^{i_s}$. The coefficient of $e^{\alpha_2^{i_s}t}$ being nonzero, by Assumption \eqref{ass:main_technical_ass}, implies that $\alpha_2^{i_s}=\tilde \mu_{j^0}$ and $\tilde p = p$ (as $\alpha_2^{i_s}=\tilde \mu_j=\mu_j$ for $j\neq j^0$ is not possible by assumption and $\alpha_2^{i_s}=\tilde \alpha_2^{i_s}$ leads to a contradiction as previously using \eqref{eq1:mu} and that $\mu_{j^0}\notin \left\{\alpha_1^{i_s},\alpha_2^{i_s}\right\}$). A similar conclusion can be derived for $\alpha_1^{i_s}$ instead of $\alpha_2^{i_s}$ above such that we obtain $\tilde \mu_{j^0}=\alpha_1^{i_s}$ or $\tilde \mu_{j^0}=\alpha_2^{i_s}$.\\
		If none of the $\alpha_i^{i_s}$ for $i\in\left\{1,2\right\}$ coincides with any $\mu_j$,  we argue again that $\tilde p=p$ and $\tilde \mu_{j^0}\in \left\{\alpha_1^{i_s},\alpha_2^{i_s}\right\}$. Certainly, since $\tilde \mu_j=\mu_j$ for $j\neq j^0$ it would follow in case $\tilde \mu_{j^0}\notin \left\{\alpha_1^{i_s},\alpha_2^{i_s}\right\}$ by the coefficients of $e^{\alpha_i^{i_s}t}$ being nonzero for $i\in\left\{1,2\right\}$ that $\left\{\tilde \alpha_1^{i_s},\tilde \alpha_2^{i_s}\right\}=\left\{\alpha_1^{i_s},\alpha_2^{i_s}\right\}$. By \eqref{eq1:mu} this implies a contradiction to $\alpha_i^{i_s}$ being different to any $\mu_j$ for $i\in\left\{1,2\right\}$.
		
		In either case one of the equalities $\tilde \mu_{j^0}=\alpha_1^{i_s}$ or $\tilde \mu_{j^0}=\alpha_2^{i_s}$ is necessarily attained for at least two $s\in\left\{2,3,4\right\}$ and we get a contradiction to the $\alpha_i^{i_s}$ being different for $s\in\left\{2,3,4\right\}$, each for $i\in\left\{1,2\right\}$, respectively.
		
		Hence, \nameref{mu:case1} is not possible.\vspace{-0.4cm}
		
		\paragraph{Case II.}\label{mu:case2}  W.l.o.g. suppose that $\mu_{j^2} = \tilde \mu_{j^2}$ but $\mu_{j^0},\mu_{j^1}$ are different from any $\tilde \mu_j$, respectively. Employing Assumption \eqref{ass:main_technical_ass} yields the existence of regions $i_2, i_3$ where the $\alpha_1^{i_s}, \alpha_2^{i_s}$ are each pairwise distinct and $\mu_{j}+k_3^{i_s}+k_4^{i_s}\neq 0$ for $j\in \left\{j^0, j^1\right\}$ and $s\in\left\{2,3\right\}$. Furthermore, either $\mu_{j^0}=\alpha_i^{i_s}$ or the coefficient of $e^{\alpha_i^{i_s}t}$ is nonzero in the representation of $C_\tis^{i_s}$, for $i\in\left\{1,2\right\}$.
		
		Neither $\mu_{j^0}$ nor $\mu_{j^1}$ can coincide with $\alpha_i^{i_s}$ for some $i\in\left\{1,2\right\}$. This can be seen exemplarily for $\mu_{j^0}=\alpha_1^{i_s}$ as follows: The coefficient of $te^{\alpha_1^{i_s}t}$ in \eqref{ctextended_mu_nonzero} would be nonzero, implying that the coefficient of $te^{\tilde\alpha_i^{i_s}t}$ is nonzero and $\mu_{j^0}=\tilde\alpha_i^{i_s}$ for some $i\in\left\{1,2\right\}$ by the unique interpolation result in Lemma \ref{lem:polyexpinter}. The coefficient of $te^{\tilde\alpha_i^{i_s}t}$ being nonzero would imply that $\tilde\alpha_i^{i_s}$ and hence, also $\mu_{j^0}$ coincides with some $\tilde\mu_j$ which is a contradiction to $\mu_{j^0}$ being different from any $\tilde\mu_j$.
		
		As a consequence, the coefficients of $e^{\alpha_i^{i_s}t}$ are nonzero for $i\in\left\{1,2\right\}$. Since the coefficients of $e^{\mu_{j^0}t}, e^{\mu_{j^1}t}$ are nonzero it necessarily holds true that 
		\begin{align}
			\label{eq2:mu}
			\left\{\mu_{j^0}, \mu_{j^1}\right\}=\left\{\tilde \alpha_1^{i_s},\tilde \alpha_2^{i_s}\right\}.
		\end{align}
		Thus, it holds $\alpha_i^{i_s}\notin\left\{\tilde\alpha_1^{i_s},\tilde\alpha_2^{i_s}\right\}$ since we have previously argued that neither $\mu_{j^0}$ nor $\mu_{j^1}$ can coincide with $\alpha_i^{i_s}$ for some $i\in\left\{1,2\right\}$. We argue that $\alpha_i^{i_s}$ also cannot equal none of the $\tilde \mu_j=\mu_j$ for $j\notin\left\{j^0, j^1\right\}$. Exemplarily, if $\alpha_1^{i_s}$ equals some $\mu_j$ for $j\notin\left\{j^0,j^1\right\}$, then the coefficient of $te^{\alpha_1^{i_s}t}$ would be nonzero in \eqref{ctextended_mu_nonzero}, implying that the coefficient of $te^{\tilde\alpha_i^{i_s}t}$ is nonzero and $\mu_{j}=\tilde \alpha_i^{i_s}$ for some $i\in\left\{1,2\right\}$ by the unique interpolation result in Lemma \ref{lem:polyexpinter}. By \eqref{eq2:mu} this is a contradiction to $\mu_j\notin\left\{\mu_{j^0}, \mu_{j^1}\right\}$ for $j\notin\left\{j^0,j^1\right\}$ since the $(\mu)_j$ are pairwise distinct. Hence, it must hold $\tilde p = p$ and $\left\{\tilde \mu_{j^0}, \tilde \mu_{j^1}\right\}=\left\{\alpha_1^{i_s}, \alpha_2^{i_s}\right\}$ for $s=2,3$. As $\alpha_2^{i_s}<\alpha_1^{i_s}$ by Lemma \ref{cor:alphas} this is again a contradiction to the $\alpha_i^{i_s}$ being pairwise distinct, for $i$, respectively. Thus, also \nameref{mu:case2} is not possible. We conclude that $\tilde p = p$ and $\mu_j = \tilde \mu_j$ for all $j=1,\dots, p$ up to reindexing.\\
		
		Note that in case one of the $j_0$, $j_1$ or $j_2$ in \eqref{eq:muj0}-\eqref{eq:muj12} does not exist, the argumentation on uniqueness of the remaining exponents $(\mu_j)_{j=1}^p$ reduces to \nameref{mu:case1} above. If at least two of the $j_s$ for $s\in \left\{0,1,2\right\}$ with \eqref{eq:muj0}-\eqref{eq:muj12} do not exist, uniqueness follows directly by the paragraph before \nameref{mu:case1} since \nameref{mu:case1} and \nameref{mu:case2} cannot occur.
		
		\paragraph{Uniqueness of $\alpha_1^i, \alpha_2^i$ for at least three regions.}\label{para:unique_3alpha} Let $j_0 \in \{1,\ldots,p\}$ and let $i_0$ be any region such that either $\mu_{j_0} = \alpha_i^{i_0}$ (i.e., the coefficient of $t e^{\alpha_i^{i_0}t}$ in the representation of $C_\tis^{i_0}$ is nonzero) or the coefficient of $e^{\alpha_i^{i_0}t}$ in the representation of $C_\tis^{i_0}$ is nonzero for $i\in\left\{1,2\right\}$. Note that, according to Assumption \eqref{ass:main_technical_ass}, at least three such regions exist.
		
		\noindent \textbf{Case I.} Assume that $\mu_{j_0} = \alpha_1^{i_0}$. This implies that the coefficient of $te^{\alpha_1^{i_0}t}$ is nonzero and, consequently, by the unique interpolation result in Lemma \ref{lem:polyexpinter}, that $\alpha_1^{i_0}\in \left\{\tilde \alpha_1^{i_0},\tilde \alpha_2^{i_0}\right\}$. As $\mu_{j_0}\neq \alpha_2^{i_0}$ it necessarily holds true that the coefficient of $e^{\alpha_2^{i_0}t}$ is nonzero and $\alpha_2^{i_0}$ coincides either with some $\tilde \mu_j$ or $\tilde \alpha_i^{i_0}$. In the former case we derive that it coincides with some $\mu_j=\tilde \mu_j$ and as previously, $\alpha_2^{i_0}\in \left\{\tilde \alpha_1^{i_0}, \tilde \alpha_2^{i_0}\right\}$. In any case we obtain that $\left\{\alpha_1^{i_0},\alpha_2^{i_0}\right\}=\left\{\tilde \alpha_1^{i_0}, \tilde \alpha_2^{i_0}\right\}$ and finally, that $\alpha_i^{i_0}=\tilde \alpha_i^{i_0}$ for $i\in\left\{1,2\right\}$, since  by Lemma \ref{cor:alphas}, $\alpha_2^{i_0}<\alpha_1^{i_0}$ and $\tilde\alpha_2^{i_0}<\tilde\alpha_1^{i_0}$. Considering $\alpha_2^{i_0}$ instead of $\alpha_1^{i_0}$ at the beginning of Case I yields the same result.
		
		\noindent \textbf{Case II.} If $\mu_{j^0} \neq \alpha_i^{i_0}$ for $i\in\left\{1,2\right\}$ then the coefficients of $e^{\alpha_1^{i_0}t}$, $e^{\alpha_2^{i_0}t}$ are nonzero, and  $\alpha_1^{i_0}, \alpha_2^{i_0}$ necessarily match some exponents in the representation of $\tilde C^{i_0}_\tis$. In case $\alpha_i^{i_0}$ matches some $\tilde{\mu}_j = \mu_j $ we derive as previously that $\alpha_i^{i_0}\in \left\{\tilde \alpha_1^{i_0},\tilde \alpha_2^{i_0}\right\}$. Otherwise this inclusion follows immediately by Lemma \ref{lem:polyexpinter}. Hence again $\left\{\alpha_1^{i_0},\alpha_2^{i_0}\right\}=\left\{\tilde \alpha_1^{i_0}, \tilde \alpha_2^{i_0}\right\}$ and as previously $\alpha_i^{i_0}=\tilde \alpha_i^{i_0}$ for $i\in\left\{1,2\right\}$ follows by Lemma \ref{cor:alphas}.
		
		\paragraph{Uniqueness of $k_2^i,k_3^i,k_4^i$ for at least three regions.}\label{para:unique_3k}
		First note that for any $i \in \{1,\ldots,n\}$ where $\alpha_1^i=\tilde \alpha_1^i, \alpha_2^i=\tilde \alpha_2^i$, from the unique interpolation result in Lemma \ref{lem:polyexpinter}, it follows that
		\begin{equation} \label{eq:coefficient_basic_equality}
			K^{i}_1 \lambda_j(\mu_j + k^ {i}_3+k_4^i) = \tilde K^{i}_1 \tilde \lambda_j(\mu_j + \tilde k^ {i}_3+\tilde k_4^i)
		\end{equation}
		for all $j\in\left\{1,\ldots,p\right\}$: Indeed, in case of $j\in\left\{1,\ldots,p\right\}$ with $\mu_j=\alpha_1^i$, it follows from the coefficients of $t e^{\alpha_1^it}$ in $C_\tis^i$ and $\tilde C_\tis^i$ (see \eqref{ctextended_mu_nonzero}) being equal that
		\[ K_1^{i} (\alpha_2^i+k_2^{i})\lambda_j = \tilde K_1^{i} (\tilde \alpha_2^i+\tilde k_2^{i})\tilde \lambda_j,
		\]
		which, employing Lemma \ref{cor:alphas} and using $\alpha_1^i=\mu_j$ and $\alpha_1^i=\tilde \alpha_1^i$, implies \eqref{eq:coefficient_basic_equality} as claimed. If $\mu_j=\alpha_2^i$, it follows from the coefficients of $t e^{\alpha_2^it}$ in $C_\tis^i$ and $\tilde C_\tis^i$ (see \eqref{ctextended_mu_nonzero}) being equal that
		\[ K_1^{i} (\alpha_1^i+k_2^{i})\lambda_j = \tilde K_1^{i} (\tilde \alpha_1^i+\tilde k_2^{i})\tilde \lambda_j,
		\]
		which, employing Lemma \ref{cor:alphas} and using $\alpha_2^i=\mu_j$ and $\alpha_2^i=\tilde \alpha_2^i$, implies \eqref{eq:coefficient_basic_equality} as claimed. In the remaining case that $\mu_j\notin\left\{\alpha_1^i,\alpha_2^i\right\}$, the equality \eqref{eq:coefficient_basic_equality} follows directly from \eqref{coeff3} since the coefficients of $e^{\mu_j t}$ in $C_\tis^i$ and $e^{\tilde \mu_j t}$ in $\tilde
		C_\tis^i$ coincide as $\mu_j=\tilde\mu_j$.
		
		Now let $i_0$ be any region where $\alpha_1^{i_0}=\tilde \alpha_1^{i_0}, \alpha_2^{i_0}=\tilde \alpha_2^{i_0}$, and for which we want to show that $k_2^{i_0} = \tilde
		k_2^{i_0} $, $k_3^{i_0} = \tilde k_3^{i_0} $ and $k_4^{i_0} = \tilde k_4^{i_0} $. Again we consider several
		cases.\vspace{-0.4cm}
		
		\paragraph{Case I.}\label{para:unique_k_case1} Assume that there exists $j_0 \in \{1,\ldots,p\}$ such that $ \mu_{j_0} + k_3^{i_0}+k_4^{i_0}= 0$. In this case, it follows from \eqref{eq:coefficient_basic_equality} that also $\mu_{j_0} + \tilde k_3^{i_0}+ \tilde k_4^{i_0} = 0$ (note that $\tilde \lambda_{j_0} \neq 0 $ and $\tilde K_1^{i_0} \neq 0$ since $\tilde{p}=p$), hence 
		\begin{equation}
			\label{equality_k3_k4}
			k_3^{i_0}+k_4^{i_0} = \tilde k_3^{i_0}+\tilde k_4^{i_0}.
		\end{equation}
		 As a consequence of the first identity in \eqref{alpha_identities} of Lemma \ref{cor:alphas} it holds $k_2^{i_0}=\tilde k_2^{i_0}$. It also holds $k_4^{i_0} = \tilde k_4^{i_0}$ as we have for $k^{i_0} = (k_2^{i_0}+k_3^{i_0}+k_4^{i_0})/2=(\tilde k_2^{i_0}+\tilde k_3^{i_0}+\tilde k_4^{i_0})/2=\tilde k^{i_0}$ that
		 \[
		 	-k^{i_0}+\sqrt{(k^{i_0})^2-k_2^{i_0}k_4^{i_0}}=\alpha_1^{i_0}=\tilde\alpha_1^{i_0}=-\tilde k^{i_0}+\sqrt{(\tilde k^{i_0})^2-\tilde k_2^{i_0} \tilde k_4^{i_0}}=	-k^{i_0}+\sqrt{(k^{i_0})^2-k_2^{i_0}\tilde k_4^{i_0}}
		 \]
		 and finally, by \eqref{equality_k3_k4} and $k_2^{i_0}>0$ by assumption of the Proposition, also $k_3^{i_0}=\tilde k_3^{i_0}$.
		
		\noindent \textbf{Case II.} Assume that $ \mu_{j} + k_3^{i_0}+k_4^{i_0} \neq 0$ for all $j$. In this case, using Assumption \eqref{ass:main_technical_ass} and the considerations on \nameref{para:unique_3alpha}, we can select $i_1$ to be a second region where again $\alpha_1^{i_1}=\tilde \alpha_1^{i_1}, \alpha_2^{i_1}=\tilde \alpha_2^{i_1}$ and such that $k_3^{i_0}+k_4^{i_0} \neq k_3^{i_1}+k_4^{i_1}$. We have two cases.
		
		\noindent \textbf{Case II.A} Assume that there exists $j_1 \in \{1,\ldots,p\}$ such that $ \mu_{j_1} + k_3^{i_1}+ k_4^{i_1}= 0$. As in \nameref{para:unique_k_case1} above, this implies that  $k_3^{i_1}+k_4^{i_1} = \tilde k_3^{i_1}+\tilde k_4^{i_1}$. Further, choosing two indices $j_2,j_3 \in \{1,\ldots,p\}$ such that $j_1,j_2,j_3$ are pairwise distinct, it follows that $ \mu_{j_2} + k_3^{i_1} + k_4^{i_1}\neq 0$ and $ \mu_{j_3} + k_3^{i_1}+  k_4^{i_1} \neq  0$ by the $(\mu_j)_j$ being different. Using \eqref{eq:coefficient_basic_equality} and $k_3^{i_1}+ k_4^{i_1} = \tilde k_3^{i_1}+ \tilde k_4^{i_1}$ this implies
		\begin{align}
			\label{eq:klambda}
		K_1^{i_1}\lambda_{j_2} = \tilde K_1^{i_1} \tilde \lambda_{j_2}\qquad \text{and} \qquad K_1^{i_1}\lambda_{j_3} = \tilde K_1^{i_1} \tilde \lambda_{j_3}.
		\end{align}
		Using that the $K^{i_1}_1, \tilde K^{i_1}_1$ are nonzero (and hence, also $\tilde\lambda_{j_2},\tilde\lambda_{j_3}\neq 0$ as $\lambda_{j_2},\lambda_{j_3}\neq 0$), \eqref{eq:klambda} implies
		\[ \frac{\tilde \lambda_{j_2}}{\lambda_{j_2}} = \frac{\tilde \lambda_{j_3}}{\lambda_{j_3}}\neq 0.
		\]
		Combining this with the equations \eqref{eq:coefficient_basic_equality} for
		$i=i_0$ and $j=j_2,j_3$ we obtain
		\[
		\frac{\mu_{j_3}+\tilde k_3^{i_0}+\tilde k_4^{i_0}}{\mu_{j_3}+k_3^{i_0}+ k_4^{i_0}} = \frac{\mu_{j_2}+\tilde k_3^{i_0}+\tilde k_4^{i_0}}{\mu_{j_2}+k_3^{i_0}+ k_4^{i_0}}.
		\]
		Reformulating this equation and using that $\mu_{j_2} \neq \mu_{j_3}$ this
		implies that $k_3^{i_0}+ k_4^{i_0} = \tilde k_3^{i_0}+ \tilde k_4^{i_0}$. By analogous arguments as in \nameref{para:unique_k_case1} we derive $k_2^{i_0} =
		\tilde k_2^{i_0}$, $k_3^{i_0} =
		\tilde k_3^{i_0}$ and $k_4^{i_0} =
		\tilde k_4^{i_0}$.
		
		\noindent \textbf{Case II.B} Assume that $ \mu_{j} + k_3^{i_1}+k_4^{i_1} \neq 0$ for all $j$. Defining $\Lambda_j := \tilde{\lambda}_j /\lambda_j\neq 0,  k_{34}^{i_s}:=k_3^{i_s}+k_4^{i_s}, \tilde k_{34}^{i_s}:=\tilde k_3^{i_s}+\tilde k_4^{i_s}$, we then obtain from \eqref{eq:coefficient_basic_equality} for pairwise distinct $j_1,j_2,j_3\in\{1,\dots, p\}$ that
		\begin{align}
			\Lambda_{j_1} \frac{\mu_{j_1}+\tilde k_{34}^{i_s}}{\mu_{j_1}+k_{34}^{i_s}} = \Lambda_{j_2} \frac{\mu_{j_2}+\tilde k_{34}^{i_s}}{\mu_{j_2}+k_{34}^{i_s}} = \Lambda_{j_3} \frac{\mu_{j_3}+\tilde k_{34}^{i_s}}{\mu_{j_3}+k_{34}^{i_s}}.\notag
		\end{align}
		for $s=0,1$. From this, we conclude that
		
		\begin{equation} \label{eq:coeff_eq_inter}
			0= \frac{\mu_{j_r}+\tilde k_{34}^{i_0}}{\mu_{j_r}+ k_{34}^{i_0}}\frac{\mu_{j_s}+\tilde k_{34}^{i_1}}{\mu_{j_s}+ k_{34}^{i_1}}
			-  \frac{\mu_{j_r}+\tilde k_{34}^{i_1}}{\mu_{j_r}+ k_{34}^{i_1}}\frac{\mu_{j_s}+\tilde k_{34}^{i_0}}{\mu_{j_s}+ k_{34}^{i_0}}
		\end{equation}
		for $r,s \in \{1,2,3\}$ with $r\neq s$. Multiplying \eqref{eq:coeff_eq_inter} with the denominator $(\mu_{j_r}+
		k_{34}^{i_0})(\mu_{j_s}+ k_{34}^{i_1})(\mu_{j_r}+ k_{34}^{i_1})(\mu_{j_s}+ k_{34}^{i_0})$
		and further dividing by $\mu_{j_r} - \mu_{j_s}\neq 0$ we obtain
		\begin{multline*}
			0 = \mu_{j_r}\mu_{j_s}\left(\tilde k_{34}^{i_0}-\tilde k_{34}^{i_1} +k_{34}^{i_1}-k_{34}^{i_0}\right)+\left(\mu_{j_r}+\mu_{j_s}\right)\left(k_{34}^{i_1}\tilde k_{34}^{i_0 }-k_{34}^{i_0}\tilde k_{34}^{i_1}\right)\\+\left(k_{34}^{i_1}-k_{34}^{i_0}\right)\tilde k_{34}^{i_0}\tilde k_{34}^{i_1}+\left(\tilde k_{34}^{i_0}-\tilde k_{34}^{i_1}\right)k_{34}^{i_0} k_{34}^{i_1}=: Q^0_{r,s}
		\end{multline*}
		for $r,s \in \{1,2,3\}$ with $r\neq s$. Dividing $Q^0_{r,s}-Q^0_{r,t}$ for $r,s,t\in \left\{1,2,3\right\}$ with $s\neq t$ by $\mu_{j_s} - \mu_{j_t}\neq 0$ gives the identity
		\begin{equation} \label{eq:coeff_intermediate_equation}
			0 = \mu_{j_r}\left(\tilde k_{34}^{i_0}-\tilde k_{34}^{i_1}+k_{34}^{i_1}-k_{34}^{i_0}\right)+\left(k_{34}^{i_1}\tilde k_{34}^{i_0}-k_{34}^{i_0}\tilde k_{34}^{i_1}\right)
		\end{equation}
		which is a linear equation in $\mu_{j_r}$ with more than one distinct root (in fact $\mu_{j_1},\mu_{j_2}$ and $\mu_{j_3}$ are roots). As a consequence,
		\[
		\tilde k_{34}^{i_0}-k_{34}^{i_0} = \tilde k_{34}^{i_1} - k_{34}^{i_1},
		\]
		i.e., $\tilde k_{34}^{i_0} = k_{34}^{i_0} + \epsilon $ and $\tilde k_{34}^{i_1} =
		k_{34}^{i_1} + \epsilon $ for some $\epsilon\in \R$. Inserting this into
		\eqref{eq:coeff_intermediate_equation} gives
		\[ \epsilon(k_{34}^{i_1} - k_{34}^{i_0}) = 0  \] which, together with $k_{34}^{i_1} \neq k_{34}^{i_0}$, yields $\epsilon = 0$ and
		in particular $k_3^{i_0}+k_4^{i_0}=k_{34}^{i_0} = \tilde{k}_{34}^{i_0}=\tilde k_3^{i_0}+\tilde k_4^{i_0}$ as desired. By analogous arguments as in \nameref{para:unique_k_case1} we derive $k_2^{i_0} =
		\tilde k_2^{i_0}$, $k_3^{i_0} =
		\tilde k_3^{i_0}$ and $k_4^{i_0} =
		\tilde k_4^{i_0}$.

		\paragraph{Uniqueness of the remaining $k_2^i,k_3^i, k_4^i$ and of the $K_1^i$ and $(\lambda_j)_j$ up to a constant factor.} 
		Take $i_0 $ to be a region where $\tilde k_2^{i_0} = k_2^{i_0}, \tilde k_3^{i_0}=k_3^{i_0}, \tilde k_4^{i_0}=k_4^{i_0}$. We know already that three such regions $i_s$ exist by the considerations on \nameref{para:unique_3k}, with pairwise distinct $k_3^{i_s}+k_4^{i_s}$ for $s\in \left\{1,2,3\right\}$ by the considerations on \nameref{para:unique_3alpha}. Then by \eqref{eq:coefficient_basic_equality} for each $j$ with $\mu_j+k_3^{i_0}+k_4^{i_0}\neq 0$ it follows 
		\begin{equation} \label{eq:coefficient_equality_interestimate}
			K^{i_0}_1 \lambda_j = \tilde K^{i_0}_1 \tilde \lambda_j.
		\end{equation}
		 Since at most one $\hat{j}$ exists with $\mu_{\hat{j}}+k_3^{i_0}+k_4^{i_0}=0$, it follows with $\zeta := K^{i_0}_1 / \tilde  K^{i_0}_1\neq 0$, that $\tilde \lambda_j = \zeta  \lambda_j$ for all $j\neq \hat{j}$. To see $\tilde \lambda_{\hat{j}}=\zeta\lambda_{\hat{j}}$ note that by \eqref{eq:coefficient_basic_equality} we have $K_1^{i_1}\lambda_{\hat{j}}=\tilde K_1^{i_1}\tilde \lambda_{\hat{j}}$ and $K_1^{i_1}\lambda_{j}=\tilde K_1^{i_1}\tilde \lambda_{j}$ for at least one $j\neq \hat{j}$ by $p\geq 4$ which already fulfills \eqref{eq:coefficient_equality_interestimate}. Thus, we derive $\tilde K_1^{i_1}/K_1^{i_1}=\tilde K_1^{i_0}/K_1^{i_0}$ and as claimed $\tilde \lambda_{\hat{j}}=\zeta\lambda_{\hat{j}}$. 
		 
		 We now aim to show that, for all $i \in \{1,\ldots,n\}$, $k_2^i = \tilde k_2^i$, $k_3^i = \tilde k_3^i$, $k_4^i = \tilde k_4^i$  and $K_1^i = \zeta \tilde K_1^i$.
		
		Consider $i \in \{1,\ldots,n\}$ fixed. To simplify notation, we drop here the
		index $i$, e.g., we write $K_1 = K_1 ^i$, $k_2 = k_2^i$, $k_3 = k_3^i$ and $k_4 = k_4^i$ and similar for $\tilde K_1, \tilde k_2, \tilde k_3, \tilde k_4$ and $\alpha_1, \alpha_2, \tilde\alpha_1, \tilde\alpha_2$. Consider now three cases:\vspace{-0.4cm}
		
		\paragraph{Case I.}\label{para:last_case1} There exist $j_0\neq j_1$ with $\left\{\alpha_1,\alpha_2\right\}=\left\{\mu_{j_0},\mu_{j_1}\right\}$. As a consequence, the coefficients of $te^{\alpha_1 t}$ and $te^{\alpha_2t}$ in \eqref{ctextended_mu_nonzero} are nonzero and by the unique interpolation result in Lemma \ref{lem:polyexpinter} also the coefficients of $te^{\tilde\alpha_1 t}$ and $te^{\tilde\alpha_2t}$ are nonzero, implying $\left\{\alpha_1, \alpha_2\right\}=\left\{\tilde \alpha_1, \tilde \alpha_2\right\}$ and by Lemma \ref{cor:alphas} that $\alpha_1=\tilde\alpha_1$ and $\alpha_2=\tilde\alpha_2$. We know already from the
		considerations on \nameref{para:unique_3k} that consequently, $k_2 = \tilde k_2$, $k_3 = \tilde k_3$ and $k_4 = \tilde k_4$, such that, from \eqref{eq:coefficient_basic_equality} for some $j\in\left\{1,\ldots,p\right\}$ with $\mu_j+k_3+k_4\neq0$,
		we get
		\[
		K_1 \lambda_j =  \tilde K_1 \tilde  \lambda_j= \tilde K_1 \zeta  \lambda_j,
		\]
		and also $K_1 = \zeta \tilde K_1$ as desired since $\lambda_j\neq 0$.
		
		\noindent\textbf{Case II.} It holds $\mu_j \notin \left\{\alpha_1, \alpha_2\right\}$ for all $j$.  Equating the coefficients in the representations of $C_\tis$ and $\tilde C_\tis$ (see \eqref{coeff3}), and
		using $\tilde \lambda_j = \zeta \lambda_j$ for $j\in\left\{1,\ldots,p\right\}$, we get that
		\begin{equation} \label{eq:remaining_coefficients_basic_equation}
			\frac{\tilde K_1\zeta  (\mu_j +\tilde  k_3+\tilde k_4)}{(\mu_j-\tilde\alpha_1)(\mu_j-\tilde \alpha_2)} =
			\frac{K_1(\mu_j + k_3+k_4)}{(\mu_j-\alpha_1)(\mu_j-\alpha_2)}
		\end{equation}
		for $j=1,\ldots,p$. Now we show that, from \eqref{eq:remaining_coefficients_basic_equation}, it follows that $\zeta \tilde K_1 = K_1$, $\tilde k_2 = k_2$, $ \tilde k_3 = k_3$ and $\tilde k_4 = k_4$. For this, we again need to distinguish several cases.\vspace{-0.4cm}

		\paragraph{Case II.A.}\label{para:last_caseIIa} $\tilde{k}_3+\tilde k_4+\mu_{j_0}=0$  for at least one $j_0 \in \{1,2,3,4\}$. This implies that also $k_3 +k_4+ \mu_{j_0} =0$ by \eqref{eq:remaining_coefficients_basic_equation} and hence that $\tilde k_3+\tilde k_4 = k_3+k_4$. Considering $j_s \in \{1,2,3,4\} \setminus \{j_0\}$ for $s\in\left\{1,2,3\right\}$ it follows from the $\mu_j$ being pairwise distinct that $k_3 +k_4+ \mu_{j_s} \neq 0$ for $s\in\left\{1,2,3\right\}$, which implies that also $\tilde k_3  +\tilde k_4+ \mu_{j_s} \neq 0$ for $s\in\left\{1,2,3\right\}$ and, consequently, by \eqref{eq:remaining_coefficients_basic_equation}, the identity $k_3+k_4 = \tilde k_3 +\tilde k_4$ and by the two identities in \eqref{alpha_identities} of Lemma \ref{cor:alphas} that
		\begin{equation}
			\label{zeta_tilde_K1_representation}
			\zeta  \tilde K_1/K_1 =\frac{\mu_{j_s}^2+\mu_{j_s}(\tilde k_2+k_3+k_4)+\tilde k_2\tilde k_4}{\mu_{j_s}^2+\mu_{j_s}(k_2+k_3+k_4)+k_2k_4}
		\end{equation}
		for $s\in\left\{1,2,3\right\}$. Setting the identity \eqref{zeta_tilde_K1_representation} equal for $1\leq r,s\leq 3$ with $r\neq s$, rearranging the terms and dividing by $\mu_{j_r}-\mu_{j_s}\neq 0$ we derive
		\begin{equation}
			\label{eq:mu_r_s}
			Q_{r,s}^1:=\mu_{j_r}\mu_{j_s}(\tilde k_2-k_2)+(\mu_{j_r}+\mu_{j_s}+k_3+k_4)(\tilde k_2\tilde k_4-k_2k_4)+k_2\tilde k_2(\tilde k_4-k_4)=0.
		\end{equation}
		Dividing $Q^1_{r,s}-Q^1_{r,t}$ for $r,s,t\in \left\{1,2,3\right\}$ with $s\neq t$ by $\mu_{j_s} - \mu_{j_t}\neq 0$ we obtain the identity
		\begin{equation}
			\label{eq:mu_1}
			\mu_{j_r}(\tilde k_2-k_2)+(\tilde k_2\tilde k_4-k_2k_4)=0
		\end{equation}
		which is a linear equation in $\mu_{j_r}$ with more than one distinct root (in fact $\mu_{j_1},\mu_{j_2}$ and $\mu_{j_3}$ are roots). Hence, we recover $\tilde k_2 = k_2\neq 0$ and finally, also $\tilde k_4=k_4$ by \eqref{eq:mu_1}. Due to $\tilde k_3+\tilde k_4 = k_3+k_4$ also $\tilde k_3=k_3$ holds true. By inserting those identities in \eqref{zeta_tilde_K1_representation} we deduce $\zeta \tilde K_1=K_1$.\vspace{-0.4cm}
		
		\paragraph{Case II.B.}\label{para:last_caseIIb} $\tilde k_3 +\tilde k_4+ \mu_j \neq 0$ for all $j\in\left\{1,2,3,4\right\}$. 
		In this case we reformulate \eqref{eq:remaining_coefficients_basic_equation} under abuse of notation defining $k_{34}=k_3+k_4, k_{234}=k_2+k_3+k_4$ (and similarly $\tilde k_{34}, \tilde k_{234}$) to obtain
		\begin{equation} \label{eq:remaining_coefficients_reformulated_equation}
			\zeta \tilde K_1/K_1  = \frac{\mu_j^3+\mu_j^2(k_{34}+\tilde k_{234})+\mu_j(\tilde k_2\tilde k_4+k_{34}\tilde k_{234})+\tilde k_2\tilde k_4 k_{34}}{\mu_j^3+\mu_j^2(\tilde k_{34}+ k_{234})+\mu_j( k_2 k_4+\tilde k_{34} k_{234})+ k_2 k_4 \tilde k_{34}}
		\end{equation}
		for all $1\leq j\leq 4$. Setting the identities in \eqref{eq:remaining_coefficients_reformulated_equation} equal for $1\leq r,s\leq 4$ with $r\neq s$, rearranging the terms and dividing by $\mu_s-\mu_r \neq 0$ yields
		\begin{multline}
			\label{eq:chain1}
			Q_{r,s}^2:=\mu_r^2\mu_s^2(\tilde k_2-k_2)+\mu_r\mu_s(\mu_r+\mu_s)[\tilde k_2(k_{34}+\tilde k_4)-k_2(\tilde k_{34}+k_4)]\\
			+\mu_r\mu_s[(\tilde k_2\tilde k_4+k_{34}\tilde k_{234})(\tilde k_{34}+k_{234})-(k_2k_4+\tilde k_{34}k_{234})(k_{34}+\tilde k_{234})]\\
			+(\mu_s+\mu_r)[\tilde k_2\tilde k_4k_{34}(\tilde k_{34}+k_{234})-k_2k_4\tilde k_{34}(k_{34}+\tilde k_{234})]\\
			+[\tilde k_2\tilde k_4k_{34}(k_2k_4+\tilde k_{34}k_{234})-k_2k_4\tilde k_{34}(\tilde k_2\tilde k_4+k_{34}\tilde k_{234})]\\
			+(\mu_s^2+\mu_s\mu_r+\mu_r^2)(\tilde k_2\tilde k_4k_{34}-k_2k_4\tilde k_{34})=0.
		\end{multline}
		Dividing $Q^2_{r,s}-Q^2_{r,t}$ for $r,s,t\in \left\{1,2,3,4\right\}$ with $s\neq t$ by $\mu_{j_s} - \mu_{j_t}\neq 0$ we derive
		\begin{multline}
			\label{eq:chain2}
			Q_{r,s,t}^3:=\mu_r^2(\mu_s+\mu_t)(\tilde k_2-k_2)+\mu_r(\mu_r+\mu_s+\mu_t)[\tilde k_2(k_{34}+\tilde k_4)-k_2(\tilde k_{34}+k_4)]\\
			+\mu_r[(\tilde k_2\tilde k_4+k_{34}\tilde k_{234})(\tilde k_{34}+k_{234})-(k_2k_4+\tilde k_{34}k_{234})(k_{34}+\tilde k_{234})]\\
			+[\tilde k_2\tilde k_4k_{34}(\tilde k_{34}+k_{234})-k_2k_4\tilde k_{34}(k_{34}+\tilde k_{234})]\\
			+(\mu_r+\mu_s+\mu_t)(\tilde k_2\tilde k_4k_{34}-k_2k_4\tilde k_{34})=0.
		\end{multline}
		Dividing $Q^3_{r,s,t}-Q^3_{r,s,u}$ for $r,s,t,u\in \left\{1,2,3,4\right\}$ with $t\neq u$ by $\mu_{j_t} - \mu_{j_u}\neq 0$ we derive
		\[
			\mu_r^2(\tilde k_2-k_2)+\mu_r[\tilde k_2(k_{34}+\tilde k_4)-k_2(\tilde k_{34}+k_4)]+[\tilde k_2\tilde k_4k_{34}-k_2k_4\tilde k_{34}]=0
		\]
		which is a quadratic equation in $\mu_{j_r}$ with more than two distinct root (in fact $\mu_{j_1},\mu_{j_2}, \mu_{j_3}$ and $\mu_{j_4}$ are roots). Hence, $\tilde k_2 = k_2\neq0$ (from the quadratic coefficient), using this, also $\tilde k_3=k_3\neq 0$ (from the linear coefficient) and as a consequence, we obtain $\tilde k_4 = k_4$ (from the constant coefficient). Thus, as previously also $\zeta \tilde K_1 = K_1$ holds true.
		
		\noindent\textbf{Case III.} It remains to analyze w.l.o.g. the case that there exists some $j_1$ such that $\alpha_1=\mu_{j_1}$ and $\alpha_2\neq\mu_j$ for all $j$. By the unique interpolation result in Lemma \ref{lem:polyexpinter} it either holds that $\tilde \alpha_1 = \alpha_1$ or $\tilde \alpha_2 = \alpha_1$. W.l.o.g. assume that $\alpha_1 = \tilde\alpha_1$ and $\alpha_2\neq \tilde\alpha_2$. Note that the last inequality can be assumed w.l.o.g. since otherwise we fall into \nameref{para:last_case1}. Furthermore, note that the case $\alpha_1 = \tilde\alpha_2$ and $\alpha_2\neq \tilde\alpha_1$ can be similarly dealt with as below.
		
		\noindent\textbf{Case III.A.} As in \nameref{para:last_caseIIa}, first assume that $k_3+ k_4+\mu_{j_0}=0$ for some $j_0$, implying that $\tilde k_3+\tilde k_4 = k_3+k_4$.  Then \eqref{eq:remaining_coefficients_basic_equation} holds for at least two different $j_2\neq j_3$ with $j_0\notin\left\{j_2,j_3\right\}$ and simplifies to
		\begin{align}
			\label{eq:simplified_coefficient_representation}
			\zeta \tilde K_1/K_1 = \frac{(\mu_j-\tilde\alpha_2)(\mu_j+k_3+k_4)}{(\mu_j-\alpha_2)(\mu_j+\tilde k_3+ \tilde k_4)}
		\end{align}
		for $j\in\left\{j_2,j_3\right\}$. Setting the expression \eqref{eq:simplified_coefficient_representation} equal for $j_2$ and $j_3$ and simplifying the resulting term using $\tilde k_3+\tilde k_4 = k_3+k_4$ yields that
		\[
			0 = (\alpha_2-\tilde\alpha_2)(\mu_{j_2}+k_3+k_4)(\mu_{j_3}+k_3+k_4)
		\]
		which is a contradiction since each factor is nonzero (as $j_0\notin\left\{j_2,j_3\right\}$).
		
		\noindent\textbf{Case III.B.} Now as in \nameref{para:last_caseIIb} above, assume that $\tilde k_3+\tilde k_4+\mu_j\neq 0$ for all $j\in\left\{1,2,3,4\right\}$ and $\alpha_1=\tilde\alpha_1, \alpha_2\neq \tilde \alpha_2$ (the case $\alpha_1 = \tilde\alpha_2$ and $\alpha_2\neq \tilde\alpha_1$ can be similarly dealt with as below) we obtain by analogous steps but starting from the simplified representation \eqref{eq:simplified_coefficient_representation} that
		\begin{align}
			\label{eq:lastlineq}
			\mu_r[(\alpha_2+k_{34})-(\tilde\alpha_2+\tilde k_{34})]+[\alpha_2\tilde k_{34}-\tilde \alpha_2k_{34}]=0
		\end{align}
		attains three different roots $\mu_r$. Hence, using Lemma \ref{cor:alphas} and $\alpha_1=\tilde \alpha_1$, the linear coefficient of the expression \eqref{eq:lastlineq} being zero yields that $k_2=\tilde k_2$. Employing this identity and once more Lemma \ref{cor:alphas} implies, by the constant coefficient of \eqref{eq:lastlineq} being zero, that
		\[
			(\alpha_2-\tilde\alpha_2)(\alpha_1+k_2)=0
		\]
		which is a contradiction as $\alpha_2\neq\tilde\alpha_2$ by assumption and $\alpha_1+k_2\neq 0$ by Lemma \ref{cor:alphas}.\\
		
		As a consequence,  the remaining $\zeta \tilde K_1^i,\tilde k_2^i,\tilde k_3^i$ considered in this final part of the proof are uniquely determined as $\zeta \tilde K_1^i = K_1^i, \tilde k_2^i = k_2^i$ and $\tilde k_3^i = k_3^i$.
	\end{proof}

	As a consequence, knowledge of the tracer concentration in tissue $C_\tis^i(t_l)$ for $i=1,\ldots,n$ and for sufficiently many distinct time-points $t_l$, suffices to determine the coefficients $k_2^i,k_3^i, k_4^i$ uniquely and the coefficients $K_1^i$ uniquely up to a constant. In view of ODE system \eqref{eq:odesystem}, the ambiguity in $K_1^i$ cannot be improved without any knowledge of $C_\art$ (since one can always divide all $K_1^i$ by a nonzero constant and multiply $C_\art$ by the same constant). In fact, a single, nonzero measurement of $C_\art$ suffices, since for $\hat{C}_\art (\hat{s})\neq
	0$ the ground truth value at some time-point $\hat{s}$, the equality $C_\art(\hat{s}) = \hat{C}_\art
	(\hat{s}) = \tilde C_\art(\hat{s}) $ together with Proposition
	\ref{prop:main_uniqueness_result}, immediately imply that $\zeta=1$ such that also the $K_1^i$ are uniquely determined.
	
	In contrast to measurements of $C_\art$, used for parameter identification (see \cite{Ver13}), which are expensive in practice, it is much simpler to obtain measurements of the blood tracer concentration $C_\tot$, where $C_\art = f C_\tot$ with some unknown function $f$. Measurements of $C_\tot$ are sufficient to uniquely identify the $K_1^i$, provided sufficiently many measurements of $C_\tot$ in relation to a parametrization of $f$ are available. For that, we need the following notion.
	
	\begin{definition}[Parametrized function class for $f(t)$] For any $q \in \N$, a set of functions $F_q \subset \{f:\R \rightarrow \R\} $ is a degree-$q$ parametrized set if for any $f,\tilde{f}\in F_q$ and $\lambda \in \R$ it holds that $\lambda f - \tilde{f}$ attaining zero at $q$ distinct points implies that $\lambda=1$ and $f = \tilde{f}$.
	\end{definition}
	Simple examples are polynomials $f$ of degree $q-1$ that for some fixed $x_0,c\in\mathbb{R}$ with $c\neq 0$ satisfy $f(x_0) = c$ or polyexponential functions $f$ of degree $q/2$ (if $q$ is even) that for some fixed $x_0,c\in\mathbb{R}$ with $c\neq 0$ satisfy $f(x_0) = c$. The latter is frequently used in practice with $f(0)=1$ (see \cite{Ver13}).
	
	\begin{proposition} \label{prop:uniqueness_attenuation_maps}
		In the situation of Proposition \ref{prop:main_uniqueness_result}, assume in addition that $f,\tilde{f}:\R \rightarrow \R$ are
		functions contained in the same degree-$q$ parametrized set of functions, and are such that
		\[ C_\art(s_l) = f(s_l)C_\tot(s_l) \text{ and } \tilde C_\art(s_l) = \tilde f(s_l)C_\tot(s_l) \text{ for }l=1,\ldots,q,\]
		with $s_1,s_2,\ldots,s_q$ being $q$ different time points, and $C_\tot(s_l)\neq 0$
		given for $l=1,\ldots,q$. Then, all assertions of Proposition
		\ref{prop:main_uniqueness_result} hold with $\zeta=1$, and further
		\[ f = \tilde{f} .\]
	\end{proposition}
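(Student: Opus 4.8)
The plan is to reduce the statement to a single comparison of the two attenuation functions at the sample points $s_1,\dots,s_q$, since essentially all the work has already been carried out in Proposition \ref{prop:main_uniqueness_result}. First I would note that the hypotheses of Proposition \ref{prop:main_uniqueness_result} are assumed to hold, so all of its conclusions are available; in particular, after a common re-indexing, $\tilde\mu_j = \mu_j$ and $\tilde\lambda_j = \zeta\lambda_j$ for all $j=1,\dots,p$ with some constant $\zeta\neq 0$. Inserting this into the polyexponential representations of the two arterial concentrations yields the functional identity $\tilde C_\art(t) = \sum_{j=1}^p \tilde\lambda_j e^{\tilde\mu_j t} = \zeta\sum_{j=1}^p \lambda_j e^{\mu_j t} = \zeta\, C_\art(t)$, valid on all of $[0,\infty)$ and not merely at the measurement times.

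Next I would evaluate the two defining relations $C_\art(s_l) = f(s_l)C_\tot(s_l)$ and $\tilde C_\art(s_l) = \tilde f(s_l)C_\tot(s_l)$ at each sample point $s_l$ and substitute the identity $\tilde C_\art(s_l) = \zeta\, C_\art(s_l)$ just obtained. This gives $\zeta f(s_l)C_\tot(s_l) = \tilde f(s_l)C_\tot(s_l)$ for $l=1,\dots,q$. Since $C_\tot(s_l)\neq 0$ by assumption, I may cancel this common factor to arrive at the pointwise identity $(\zeta f - \tilde f)(s_l) = 0$ for all $l=1,\dots,q$; that is, the function $\zeta f - \tilde f$ vanishes at the $q$ distinct points $s_1,\dots,s_q$.

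Finally, I would invoke the defining property of a degree-$q$ parametrized set of functions with the choice $\lambda = \zeta$: since $f,\tilde f\in F_q$ and $\zeta f - \tilde f$ attains zero at $q$ distinct points, the definition forces $\zeta = 1$ and $f = \tilde f$. Substituting $\zeta = 1$ back into the conclusions of Proposition \ref{prop:main_uniqueness_result} turns $K_1^i = \zeta\tilde K_1^i$ into $K_1^i = \tilde K_1^i$ and $\tilde\lambda_j = \zeta\lambda_j$ into $\tilde\lambda_j = \lambda_j$, so that indeed all its assertions hold with $\zeta = 1$, as claimed.

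I do not anticipate a genuine obstacle, as the heavy analytic work is already done in Proposition \ref{prop:main_uniqueness_result}; the argument here is essentially a short algebraic cancellation followed by one application of the definition. The only points requiring minor care are to use the re-indexing consistently so that $\tilde C_\art = \zeta\, C_\art$ is established as a true function identity (rather than only at the $s_l$), and to match the number $q$ of available $C_\tot$-measurements to the degree of the parametrized class $F_q$ so that its defining property applies verbatim.
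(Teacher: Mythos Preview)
Your proposal is correct and follows essentially the same route as the paper: derive $\tilde C_\art = \zeta\, C_\art$ from Proposition~\ref{prop:main_uniqueness_result}, cancel the nonzero factor $C_\tot(s_l)$ to get $(\zeta f - \tilde f)(s_l)=0$ at the $q$ sample points, and then invoke the defining property of a degree-$q$ parametrized set to force $\zeta=1$ and $f=\tilde f$. The only cosmetic difference is that you spell out the passage from $\tilde\lambda_j=\zeta\lambda_j$, $\tilde\mu_j=\mu_j$ to the functional identity $\tilde C_\art = \zeta C_\art$ explicitly, whereas the paper states it in one line.
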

	\begin{proof}
		By Proposition \ref{prop:main_uniqueness_result} it holds $\tilde{C}_\art = \zeta C_\art$. Using that, by assumption,
		\[   \zeta f(s_{l})C_\tot(s_{l})  = \zeta C_\art(s_{ l}) = \tilde C_\art(s_{l}) = \tilde f(s_{ l})C_\tot(s_{ l}),\]
		we derive $(\zeta f - \tilde{f})(s_l) = 0$ for $l=1,\ldots,q$. Since
		$f,\tilde{f}:\R \rightarrow \R$ are functions contained in the
		same degree-$q$ parametrized set, this implies that $\zeta = 1$ and $f =
		\tilde{f}$ as claimed.
	\end{proof}

	Finally, a summary of the previous results is given as follows.
	\begin{theorem} \label{thm:main_uniqueness} Let $(p,n,((\lambda_j,\mu_j))_{j=1}^p ,((K_1^i,k_2^i,k_3^i,k_4^i))_{i=1}^n,(C^i_\tis)_{i=1}^n,C_\art)$
		be a ground-truth \\ configuration of the reversible two tissue compartment model such that
		\begin{enumerate}
			\item $p \geq 4$, $n\geq 3$ and  $K_1^i,k_2^i,k_3^i,k_4^i > 0$ for all $i=1,\ldots,n$,
			\item There are at least $p+3$ regions $i_1,\ldots,i_{p+3}$ where each the
			$k_3^{i_s}+k_4^{i_s}$ and the $\alpha_1^{i_s}, \alpha_2^{i_s}$ are pairwise distinct for
			$s=1,\ldots,p+3$.
		\end{enumerate}
		Let further $C_\tot:[0,\infty) \rightarrow [0,\infty)$ be the ground truth arterial whole blood tracer concentration.
		
		Then, for any other parameter configuration $(\tilde p,n,((\tilde
		\lambda_j,\tilde \mu_j))_{j=1}^{\tilde p} ,((\tilde K_1^i,\tilde k_2^i,\tilde
		k_3^i, \tilde k_4^i))_{i=1}^n,(\tilde C^i_\tis)_{i=1}^n,$ $\tilde C_\art)$ such that the
		conditions 1) and 2) above also hold, it follows from
		\[C_\tis(t_l) = \tilde C_\tis(t_l) \quad \text{for }l=1,\ldots, T\]
		with $T \geq \max\{ 2(p+4),2(\tilde p + 4)\}$ and the $t_1,\ldots,t_T$ pairwise
		distinct, that,
		\[  K_1^i = \zeta \tilde K_1^i,  k_2^i =\tilde k_2^i, k_3^i =\tilde k_3^i \text{ and }  k_4^i =\tilde k_4^i\text{ for all }i=1,\ldots,n,
		\]
		for some constant $\zeta\neq 0$, that $p= \tilde{p}$, and that (up to re-indexing)
		\[ \tilde \mu_j = \mu_j \text{ and } \tilde \lambda_j =\zeta  \lambda_j \text{ for all }i=1,\ldots,p.
		\]
		If further $f:[0,\infty) \rightarrow [0,\infty)$ is a ground-truth ratio between $C_\art$ and $C_\tot$
		in a degree-$q$ \\ parametrized set of functions and $\tilde
		f:[0,\infty) \rightarrow [0,\infty)$ is a function in the same
		degree-$q$ parametrized set of functions such that
		\[ C_\art(s_l) = f(s_l)C_{\tot}(s_l) \text{ and } \tilde C_\art(s_l) = \tilde f(s_l)C_{\tot}(s_l) \text{ for }l=1,\ldots,q,\]
		with the $s_1,\ldots,s_q$ pairwise distinct and $C_{\tot}(s_l)\neq 0$ given,
		then $\zeta=1$ and
		\[ f = \tilde{f}.
		\]
		\begin{proof}
			This follows immediately by Lemma \ref{lem:simpler_version_of_main_assumption} and Proposition \ref{prop:main_uniqueness_result}. In fact, Lemma \ref{lem:simpler_version_of_main_assumption} ensures that the assumptions of Proposition \ref{prop:main_uniqueness_result} are satisfied provided that 1.) and 2.) hold. In case $\tilde{p}\leq p$ the result immediately follows from Propositions \ref{prop:main_uniqueness_result} and \ref{prop:uniqueness_attenuation_maps}. In case $\tilde{p}> p$ it follows from interchanging the roles of the two configurations and again applying Propositions \ref{prop:main_uniqueness_result} and \ref{prop:uniqueness_attenuation_maps}.
		\end{proof}
	\end{theorem}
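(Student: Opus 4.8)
The plan is to obtain the theorem as a direct assembly of Lemma \ref{lem:simpler_version_of_main_assumption}, Proposition \ref{prop:main_uniqueness_result} and Proposition \ref{prop:uniqueness_attenuation_maps}, the one extra ingredient being a symmetrization step that removes the ordering hypothesis $\tilde p \le p$ imposed in Proposition \ref{prop:main_uniqueness_result}.

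First I would check that the structural hypotheses of Proposition \ref{prop:main_uniqueness_result} are in force. Condition 1) supplies $p \ge 4$, $n \ge 3$ and strict positivity of all kinetic parameters (in particular $\tilde k_3^i > 0$, which is exactly what is needed for the competing configuration to admit the representation \eqref{ctextended}). Condition 2) is precisely the hypothesis of Lemma \ref{lem:simpler_version_of_main_assumption}, so that lemma delivers Assumption \eqref{ass:main_technical_ass}. Hence every assumption required by Proposition \ref{prop:main_uniqueness_result} holds for the ground-truth configuration, and, since conditions 1) and 2) are imposed symmetrically on both configurations, also for the competing one.

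Next I would split according to whether $\tilde p \le p$ or $\tilde p > p$. Because $T \ge \max\{2(p+4),2(\tilde p + 4)\}$, the time-point hypothesis is satisfied regardless of which configuration is taken as the reference. If $\tilde p \le p$, I apply Proposition \ref{prop:main_uniqueness_result} directly to conclude $p = \tilde p$, the equalities $k_2^i = \tilde k_2^i$, $k_3^i = \tilde k_3^i$, $k_4^i = \tilde k_4^i$ for all $i$, and the existence of a constant $\zeta \neq 0$ with $K_1^i = \zeta \tilde K_1^i$ together with $\tilde \mu_j = \mu_j$ and $\tilde \lambda_j = \zeta \lambda_j$ up to re-indexing. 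If instead $\tilde p > p$, I interchange the roles of the two configurations: both satisfy 1) and 2), and the measurement identity $C_\tis^i(t_l) = \tilde C_\tis^i(t_l)$ is symmetric, so Proposition \ref{prop:main_uniqueness_result} applies with the tilde configuration as the distinguished one and forces $p = \tilde p$, contradicting $\tilde p > p$ unless equality holds; this yields the same identities with $\zeta$ replaced by its reciprocal, which is harmless.

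Finally I would upgrade $\zeta$ to $1$ using the whole-blood data. Having established $\tilde C_\art = \zeta C_\art$, the hypotheses on $f,\tilde f$, namely membership in a common degree-$q$ parametrized set together with the $q$ matching evaluations $C_\art(s_l) = f(s_l)C_\tot(s_l)$ and $\tilde C_\art(s_l) = \tilde f(s_l)C_\tot(s_l)$ at distinct points with $C_\tot(s_l)\neq 0$, are exactly those of Proposition \ref{prop:uniqueness_attenuation_maps}, which gives $\zeta = 1$ and $f = \tilde f$. I expect the only genuine obstacle to be the bookkeeping in the symmetrization step: one must confirm that Proposition \ref{prop:main_uniqueness_result} is truly applicable in both orientations (degree bound, positivity, and Assumption \eqref{ass:main_technical_ass} for whichever configuration serves as reference), after which the two candidate constants are forced to be mutually reciprocal and therefore consistent.
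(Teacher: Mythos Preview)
Your proposal is correct and follows essentially the same route as the paper: invoke Lemma \ref{lem:simpler_version_of_main_assumption} to obtain Assumption \eqref{ass:main_technical_ass}, split on $\tilde p \le p$ versus $\tilde p > p$, apply Proposition \ref{prop:main_uniqueness_result} (swapping roles in the second case), and finish with Proposition \ref{prop:uniqueness_attenuation_maps}. The only cosmetic difference is that the paper simply says ``interchange the roles'' without your explicit remark about the reciprocal constant, but the substance is identical.
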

	\begin{remark}[Practical interpretation]
		The result in Theorem \ref{thm:main_uniqueness} can be interpreted as follows: i) Given that the assumptions of Theorem \ref{thm:main_uniqueness} hold for the ground truth parameter configuration (which, as argued in Remark \ref{rem:probabilistic_interpretation} is true almost surely) one can check a-posteriori if a numerically computed configuration matching the measured data satisfied the assumptions of Theorem \ref{thm:main_uniqueness}. If this is true (which is most likely the case), one can be sure that the obtained parameter configuration is the ground truth parameter configuration.
		ii) Generally, image-based measurements of the tracer concentration in tissue (without knowing $C_\art$ or $C_\tot$) are sufficient to determine the $(k_2^i,k_3^i,k_4^i)_{i=1}^n$ and the $(K_1^i)_{i=1}^n$ up to a global constant. In case the function $f$ is modeled by a biexponential function, which is sufficient in practice, the $(K_1^i)_{i=1}^n$ can be identified on the basis of four measurements of $C_\tot$, for which image-based measurements or a simple blood analysis are sufficient.
	\end{remark}
	\begin{remark}[Nontrivial fractional blood volume]
		\label{rem:cpet}
		In practice a realistic generalization is to assume that the voxel measurements in the PET images are given by a mixture of the blood tracer and tissue concentration rather than entirely the latter, i.e., the voxel measurements fulfill
		$$C_\pet(t) = (1-\fbv)\cdot C_\tis(t)+\fbv\cdot C_\tot(t),$$ where $\fbv$ with $0\leq \fbv<1$ 
		describes the fractional blood volume. The setting of known $\fbv$ and $C_\tot$ (at the same time points as the PET image measurements) is covered by above results directly. 
		If both $\fbv$ and $C_\tot$ are not available, an ansatz would be to parametrize $C_\tot$ by a polyexponential function, and assume enough measurements of $C_\pet$ to be available for the unique interpolation result 
		of Lemma \ref{lem:polyexpinter} to be applicable. With this, similar techniques as in Proposition \ref{prop:main_uniqueness_result} can be applied. 
	\end{remark}
	\section{Conclusion}
	The central analytic result of this work is that most tracer tissue kinetic parameters of the reversible two tissue compartment model can be recovered from standard PET measurements based on mild assumptions, specifically if sufficiently many different regions are modeled and enough standard image-based PET measurements at different time points are available. Furthermore, in case sufficiently many measurements of the total arterial concentration are available (which can be obtained from image-based measurements or simple blood sample analysis), the full recovery of all tracer tissue kinetic parameters of the model is possible. The significance of the analytic result, which holds in the idealized noiseless regime, is that it verifies parameter identifiability using practically easily obtainable quantities from image-based measurements or with simple blood sampling in principle. While this kind of result has been already shown for the irreversible two tissue compartment model it is novel for its reversible extension, which is practically highly relevant in quantitative PET imaging. 

	An important future research direction, that we will take, is to numerically investigate tissue parameter identifiability based only on image-based PET measurements and estimations of the total arterial tracer concentration for the (ir)reversible two tissue compartment model for real measurement data. We believe that to apply the analytic results meaningfully in practice, it is essential to also study stability and model -uncertainty and error.
	
	\bibliographystyle{siamplain}
	\bibliography{references}
\end{document}